\renewcommand{\phi}{\varphi}
\renewcommand{\and}{\wedge}
\newcommand{\vb}[1]{\mathbf{#1}}
\definecolor{mygreen}{rgb}{0,0.6,0}
\definecolor{mygray}{rgb}{0.5,0.5,0.5}
\definecolor{mymauve}{rgb}{0.58,0,0.82}
\newtheorem{proposition}{Proposition}[section]
\newtheorem{remark}{Remark}[section]
\begin{document}


\title{Minimization-based embedded boundary methods as polynomial corrections: a stability study of discontinuous Galerkin for hyperbolic equations}
%
\author{Mirco Ciallella\thanks{Laboratoire Jacques-Louis Lions, Universit\'e Paris Cit\'e, CNRS, UMR 7598, Paris, France 
  (\href{mailto:mirco.ciallella@u-paris.fr}{mirco.ciallella@u-paris.fr}).}
}

\date{}
%
%
%
%
%
\maketitle

\begin{abstract} 
This work establishes a novel, unified theoretical framework for a class of high order embedded boundary methods, revealing that the Reconstruction for Off-site Data (ROD) treatment shares a fundamental structure with the recently developed shifted boundary polynomial correction [Ciallella, M., et al. (2023)]. By proving that the ROD minimization problem admits an equivalent direct polynomial correction formulation, we unlock two major advances. First, we derive a significant algorithmic simplification, replacing the solution of the minimization problem with a straightforward polynomial evaluation, thereby enhancing computational efficiency. Second, and most critically, this reformulation enables the first stability result for the ROD method when applied to the linear advection equation with discontinuous Galerkin discretization. Our analysis, supported by a comprehensive eigenspectrum study for polynomial degrees up to six, characterizes the stability region of the new ROD formulation. The theoretical findings, which demonstrate the stability constraints, are validated through targeted numerical experiments.
\end{abstract}



\section{Introduction}

high order discontinuous Galerkin (DG) methods \cite{reed1973triangular,cockburn2001runge} offer a powerful framework for achieving accurate solutions with improved computational efficiency \cite{wang2013high}. However, their application to problems with complex geometries presents a significant challenge, as the accuracy of the final numerical solution is intrinsically linked to the precision of the geometry representation \cite{bassi1997high}. The isoparametric approach, which uses high order polynomial mappings to represent curved boundaries, can provide optimal convergence \cite{bassi1997high}. However, the practical difficulties associated with generating and validating high order curvilinear meshes remain relevant hurdles for realistic applications \cite{persson2009curved,coppeans2025aerodynamic}: nonlinear mappings, complex quadrature formulas, and mesh quality control are the major issues to be addressed.

Embedded (or immersed) boundary methods present an alternative paradigm, aiming to circumvent these mesh generation challenges by using simple, fixed background grids over which complex boundaries can move freely. This shift transfers the primary difficulty from mesh generation to the formulation of accurate and stable boundary conditions, as the method introduces a geometric error proportional to the mesh size $h$. Following the pioneering immersed boundary work of Peskin \cite{peskin2002immersed}, extensive development has produced many methods for fluid and solid mechanics \cite{fadlun2000combined,hansbo2002unfitted,fedkiw1999non,mittal2005immersed,berger2012progress,monasse2012conservative,coco2013finite,rapaka2020efficient}. While historically many were low-order, recent years have seen growing interest in high order embedded treatments compatible with high order solvers \cite{tan2010inverse,atallah2022high,giuliani2022two,ciallella2023shifted,ciallella2022extrapolated,BOSCHERI2025114215,visbech2025spectral}, which better adapt to modern computer architectures.

This work advances this field by addressing a specific gap: the stability of \emph{minimization-based} high order embedded methods, whose analysis has been hindered by their implicit formulation. We focus on the Reconstruction for Off-site Data (ROD) method \cite{costa2018very}, initially developed to impose high order conditions for curved boundaries on simplicial meshes within finite volume \cite{costa2018very,costa2019very} and DG frameworks \cite{santos2024very,ciallella2024very}. In a DG context, ROD operates by constructing a modified boundary polynomial via a constrained minimization problem, which is as close as possible to the internal polynomial and exactly satisfies the true boundary condition. This modified polynomial is then used to impose consistent boundary conditions on a computational boundary, which does not match the real boundary.
The central analytical obstacle is that this minimization-based definition cannot be directly employed for stability investigation. Therefore, our first and fundamental contribution is to prove that the ROD formulation is mathematically equivalent to an explicit polynomial correction, akin in structure to the shifted boundary polynomial correction \cite{ciallella2023shifted,ciallella2025stability}. This equivalence reveals ROD to be part of a broader unified framework for embedded treatments. Practically, it yields a significant algorithmic simplification, replacing the solution of a minimization problem with the application of a precomputable correction.
This reformulation enables our second contribution: a rigorous stability analysis of high order DG methods coupled with the ROD treatment for the one-dimensional linear advection equation
\begin{equation}
    \label{eq:linear_advection_intro}
    \begin{cases}
    \partial_t u(x,t) + \partial_x u(x,t) = 0, \quad & x\in[x_L,x_R], \; t\geq 0 \\
    u(x_L,t) = u_D(t),\quad & t\geq 0, \\
    u(x,0) = u_0(x),\quad & x\in[x_L,x_R],
    \end{cases}
\end{equation}
where $u$ is the conservative variable and the advection speed is equal to one.
By recasting the method in polynomial correction form, we can straightforwardly construct the discretized operator and study its eigenspectrum, an established technique for analyzing complex high order boundary treatments \cite{M2AN_2015__49_1_39_0,yang2025inverse,ciallella2025stability}, which is made even easier by the local structure of DG. We extend the analysis beyond the standard ROD formulation, based on the Euclidean norm, to consider an $L^2$-norm variant, demonstrating how the choice of norm influences stability.
Our findings reveal that, for polynomial degrees $p \leq 4$, a stable explicit scheme can be constructed under a standard CFL condition, an improvement over related embedded methods \cite{ciallella2025stability}. For very high orders, we characterize the stability limitations and show that the implicit time integration can improve them, although with specific CFL constraints: in particular, we show that the CFL has to be high enough to ensure stability, which does not make it optimal if one studies unsteady problems. 
All theoretical results are validated through comprehensive numerical experiments spanning multiple boundary configurations and orders of accuracy from second to seventh.

The rest of the paper is organized as follows.
In section \ref{sec:DGadvection} the high order DG discretization for the linear advection equation is presented. 
In section \ref{sec:embedded} the notation of the embedded boundary methods is introduced.
Section \ref{sec:SBcorrection} describes the high order SB polynomial correction, which is used to make a link with the novel ROD polynomial corrections.
In section \ref{sec:RODcorrection}, we show that several variations of the ROD minimization-based method can be recast as a polynomial correction, similar to the one described in section \ref{sec:SBcorrection}. 
The stability analysis for all embedded boundary methods are presented in section \ref{sec:stability}.
The analysis is applied first to DG with standard periodic boundary conditions in section \ref{sec:periodic} to show that standard CFL conditions are retrieved. 
Then, the DG methods with the considered embedded boundary method and explicit time integration are studied in 
section \ref{sec:RODexplicit};
while their coupling with implicit time integration are considered in 
section \ref{sec:RODimplicit}.
Numerical tests that validate all the theoretical results are presented in section \ref{sec:tests},
and some conclusions are drawn in section \ref{sec:conclusions}.

\section{Discontinuous Galerkin for linear advection}\label{sec:DGadvection}

This section introduces the one-dimensional DG formulation that serves as the foundation for the analysis of the embedded boundary treatments considered in this work.
Consider a one-dimensional computational domain $\Omega$ partitioned into $N_e$ non-overlapping cells $\Omega_i$, such that $\Omega=\bigcup_{i=1}^{N_e} \Omega_i$.
For simplicity, each cell $\Omega_i=[x_{i-1/2},x_{i+1/2}]$ has a fixed size of $\Delta x$. 
The discrete solution $u_h$ resides in the broken polynomial space
\begin{equation}
V_h:= \bigoplus_{i=1}^{N_e} V_i, \qquad \quad V_i := \mathbb{P}^p(\Omega_i),
\end{equation}
where $V_i$ denotes the local space of polynomials of degree $p\geq 0$ on cell $\Omega_i$. Thus, $u_h$ is a piece-wise polynomial that may be discontinuous at cell interfaces.

Within each element $\Omega_i$, the numerical solution $u_h$ is expressed in terms of local basis functions $\{\phi_n\}_{n=0}^{p}$ of the space $V_i$:
\begin{equation}
	\label{eq:discontinuous_approximation}
    u_h(x, t)|_i = \sum_{n=0}^{p} \phi^i_n(x) u_{i,n}(t) ,
\end{equation}
where $u_{i,n}(t)$ are the time-dependent modal coefficients of cell $\Omega_i$.

To be aligned with the numerical experiments, the model problem under consideration is the linear advection equation with a source term:
\begin{equation}
    \label{eq:linear_advection_source}
    \partial_t u(x,t) + \partial_x u(x,t) = s(x).
\end{equation}
which will be used to derive the semi-discrete form. For brevity, the explicit dependence on $x$ and $t$ is often omitted below.

The DG weak formulation in the semi-discrete form for cell $\Omega_i$ is obtained by projecting \eqref{eq:linear_advection_source} and applying integration by parts, 
\begin{equation}
	\label{eq:weak_formulation_integrated}
	\int_{\Omega_i} \phi^i_m \frac{\text{d} u_h}{\text{d} t}  \,\text{d}x - \int_{\Omega_i} \partial_x  \phi^i_m u_h \,\text{d}x + 
    \int_{\partial \Omega_i} \phi^i_m \hat u(u_h^-,u_h^+) n \,\text{d}\Sigma = \int_{\Omega_i} \phi^i_m s(x) \,\text{d}x , \quad\forall \phi^i_m \in V_i,
\end{equation}
where $n$ is the outward normal. Here, $u_h^-$ and $u_h^+$ denote the left and right traces at an interface, and $\hat{u}(\cdot,\cdot)$ is a numerical flux. For the scalar advection problem with unit speed, the upwind flux $\hat{u}(u_h^-, u_h^+) = u_h^-$ is employed.
In one-dimension, equation \eqref{eq:weak_formulation_integrated} can be simply rewritten as
\begin{equation}
	\label{eq:weak_formulation_flux}
	\int_{\Omega_i} \phi^i_m \frac{\text{d} u_h}{\text{d} t}  \,\text{d}x - \int_{\Omega_i} \partial_x \phi^i_m u_h \,\text{d}x + 
     \bigl[\phi^i_m \hat u(u_h^-,u_h^+)\bigr]^{i+1/2}_{i-1/2} = \int_{\Omega_i} \phi^i_m s(x) \,\text{d}x , \quad\forall \phi^i_m \in V_i.
\end{equation}

Substituting the expansion \eqref{eq:discontinuous_approximation} into \eqref{eq:weak_formulation_flux} yields the semi-discrete system for element $\Omega_i$ in matrix-vector form: 
\begin{equation}
	\label{eq:discontinuous_galerkin_matrix_form}
	M_i  \frac{\text{d} \mathbf{u}_i}{\text{d} t} - K^s_i \mathbf{u}_i + K^R_i \mathbf{u}_i - K^L_i \mathbf{u}_{i-1} = \mathbf{s}_i, 
\end{equation}
where  $\mathbf{u}_i=[u_{i,0}, \ldots, u_{i,p}]^T$ is the vector of local degrees of freedom.
The elemental mass matrix $M_i$, the stiffness matrix $K^s_i$, and the interface flux matrices are defined as,
\begin{align}
    M_i = \int_{\Omega_i} \phi^i_m \phi^i_n \,\text{d}x ,&\qquad K^s_i = \int_{\Omega_i} \partial_x \phi^i_m \phi^i_n \,\text{d}x , \label{eq:mass_stiff_mat} \\
    K^R_i =  \phi_m^i(x_{i+1/2}) \phi_n^i(x_{i+1/2})  ,&\qquad K^L_i = \phi_m^i(x_{i-1/2}) \phi_n^{i-1}(x_{i+1/2}),      \label{eq:interface_mat}
\end{align}
where $\phi_m^i(x_{i+1/2})$ represents the $m$-th basis of cell $\Omega_i$ in $x_{i+1/2}$.

The stability analysis is performed by examining the eigenspectrum of the semi-discrete system. To proceed, we first assemble the local DG formulation into a global system of ordinary differential equations:
\begin{equation}\label{eq:semidiscrete} 
    \mathbf{M}\frac{\text{d} \mathbf{U}}{\text{d} t}  = \mathbf{K} \mathbf{U}, 
\end{equation} 
where $\mathbf{M}$ is the block-diagonal global mass matrix, $\mathbf{K}$ is the global stiffness matrix incorporating both interior stiffness and boundary conditions, and $\mathbf{U}=[\mathbf{u}_1,\ldots,\mathbf{u}_{N_e}]^T$ is the global vector of all modal coefficients.

The stability of the fully discrete scheme depends on the interaction between spatial discretization and time integration.
We employ arbitrary high order Deferred Correction (DeC) schemes \cite{dutt2000spectral,micalizzi2024new} of order $p+1$ for time marching. For a linear system like \eqref{eq:semidiscrete}, the amplification factor of a $(p+1)$-th order DeC scheme can be expressed as
\begin{equation}\label{eq:amplificationRK} 
    \mathbf{U}^{n+1} = \left( \sum_{k=0}^{p+1} \frac{\mu^k}{k!}  \right)\mathbf{U}^n, 
\end{equation}
where $\mathbf{U}^n = \mathbf{U}(t^n)$, $\lambda$ denotes the eigenvalue of semi-discrete operator $\mathbf{M}^{-1}\mathbf{K}$, and $\mu=\lambda \Delta t$. 
In this study, spatial polynomial degrees up to $p=6$ are investigated, coupled with seventh-order DeC time integration. Consequently, the stability region in the complex $\mu$-plane is determined by the condition $| \sum_{k=0}^{p+1} \mu^k / k! | \leq 1$.

To perform the analysis is therefore needed to recast the ROD embedded boundary treatment in the general matrix form $K\mathbf{u}$. Although this is trivial for the SB polynomial correction as it was previously shown in \cite{ciallella2025stability} and recalled in section \ref{sec:SBcorrection}, this is not straightforward for the minimization-based ROD techniques which need careful manipulation to be written in such a form. 
In section \ref{sec:RODcorrection}, the ROD minimization problem is explicitly solved to show that this class of methods as well can be written as a special polynomial correction. This allows one to make a link between the two families of methods and, more interestingly allows one to perform a stability analysis.

\section{Notation of embedded boundary  methods} \label{sec:embedded}

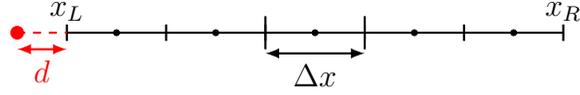
\begin{figure}
\centering
    \begin{tikzpicture}[>=latex,scale=1.1]

    \def\n{5} 
    \def\dx{1.2} 

    \foreach \i in {0,...,\n}{
    \draw[thick] (\i*\dx,-0.1) -- (\i*\dx,0.1);
    }

    \draw[thick] (0,0) -- (\n*\dx,0);
    \foreach \i in {1,...,\n}{
    \filldraw[black] ({(\i-0.5)*\dx},0) circle (1pt);
    }
    \draw[thick] (3*\dx,-0.2) -- (3*\dx,0.2);
    \draw[thick] (2*\dx,-0.2) -- (2*\dx,0.2);

    \node[below] at (0,0.5) {$x_L$};
    \node[below] at (6,0.5) {$x_R$};


    \draw[thick,red,dashed] (-0.6,0) -- (0,0);
    \draw[thick,fill=red,red] (-0.6,0) circle (2pt);
    \draw[thick,<->,red] (-0.6,-0.2) -- (0,-0.2);
    \node[below,red] at (-0.3,-0.2) {$d$};
    \draw[thick,<->] (2*\dx,-0.25) -- (3*\dx,-0.25);
    \node[below] at (2.5*\dx,-0.25) {$\Delta x$};

    \end{tikzpicture}
    \caption{One-dimensional domain with an embedded boundary condition (red circle) imposed at a distance $d$ from $x_L$, which is defined as the computational boundary in this simplified configuration.}\label{fig:1dmesh}
\end{figure}

The work analyzes the stability of the ROD minimization-based treatment in the standard unfitted configuration where the physical boundary is separated from the computational mesh by a distance $d$ of the order of the mesh size $\Delta x$.  
This scenario is typical for {\it immersed} boundary simulations that employ a Cartesian background grid, as illustrated in figure \ref{fig:1dmesh}.
While in multi-dimensional settings the correspondence between a point on the true boundary and its projection onto the computational boundary may be defined in various ways (e.g., closest-point or level-set), the one-dimensional case admits a unique and straightforward mapping. This simplification permits a focused investigation of the stability properties introduced by the minimization-based boundary treatment.

Thus, $\bar{\mathbf{x}}$ denotes a point on the physical boundary, and $\tilde{\mathbf{x}}$ its associated point on the computational boundary. 
The straightforward mapping between the two is simply given by $\bar{\mathbf{x}} = \tilde{\mathbf{x}} + \mathbf{d}$, where $\mathbf{d}$ is the distance vector.
Considering $\mathbf{n}_e$ the unit outward normal of the boundary face, the signed distance can be defined as the scalar product: $d = - \mathbf{n}_e^T (\bar{\mathbf{x}} - \tilde{\mathbf{x}} )$.
Following this notation, the distance can vary within  $[-\Delta x, \Delta x]$.
A positive $d$ indicates that the true boundary lies inside the first mesh element, whereas a negative $d$ places it outside the element. In one dimension, the mapping reduces to the scalar relation $\bar{x} = \tilde{x} + d$.

At the computational boundary, the embedded condition is enforced via a numerical flux. For the upwind flux considered here, the left-boundary flux is $\hat u (v_h, u_h^+) = v_h$, where $v_h$ represents the high order boundary reconstruction provided by the ROD treatment.

\section{Shifted boundary polynomial correction}\label{sec:SBcorrection}

This section briefly reviews the SB polynomial correction \cite{ciallella2023shifted} to establish notation and provide context for the ROD formulations derived in section \ref{sec:RODcorrection}. A comprehensive stability analysis of the SB method is available in \cite{ciallella2025stability} and is not repeated here for brevity.
The SB approach enforces high order boundary conditions on the computational boundary $\tilde{x}$ by applying a corrected Dirichlet value thanks to a truncated Taylor expansion.
Let $u_D$ be the Dirichlet boundary condition on the real boundary $\bar x$, which does not coincide with $x_L$.
A corrected boundary condition $v_h$ for the computational boundary $\tilde x$ can be constructed starting from the Taylor expansion:
\begin{equation}\label{eq:taylor}
    u(\bar x)=u(\tilde x + d) = u(\tilde x) + \sum_{m=1}^p \partial_x^{(m)} u(\tilde x) \frac{d^m}{m!}.
\end{equation}
Hence, a modified boundary condition can be developed including all high order derivative terms and 
by enforcing the prescribed boundary condition $u(\bar x) = u_D$:
\begin{equation}
    v_h(\tilde x) = u_D - \sum_{m=1}^p \partial_x^{(m)} u_h(\tilde x) \frac{d^m}{m!}.  
\end{equation}
As shown in \cite{ciallella2023shifted}, from equation \eqref{eq:taylor} it is direct to infer that the sum of derivative terms in the above expression is equivalent to the difference of the polynomial approximation evaluated at the two boundary locations.
Therefore, the SB Dirichlet boundary condition $v_h$ on $\tilde x$ can be rewritten using the following correction:
\begin{equation}\label{eq:SBcorrection}
\begin{split}
    v_h(\tilde x) &= u_D - (u_h(\tilde x + d) - u_h(\tilde x)) \\
                  &= u_h(\tilde x) - u_h(\bar x) + u_D \\
                  &= [\bm{\phi}^{\text{SB}}(\tilde x)]^T\, \mathbf{u} + u_D ,
\end{split}
\end{equation}
where $\mathbf{u}$ are the modal coefficients of the boundary element. 
Here, we also introduce the modified SB basis $\bm{\phi}^{\text{SB}}$ as:
\begin{equation}
    \bm{\phi}^{\text{SB}}(\tilde x) = \bm{\phi}(\tilde x) - \bm{\phi}(\bar x), 
\end{equation}
where $\bm{\phi} = [\phi_0,\ldots,\phi_p]^T$.

It should be noticed that equation \eqref{eq:SBcorrection} also holds in multiple dimensions since the Dirichlet condition imposed on a quadrature point is not influenced by the BC imposed at the next point. Therefore, in multiple space dimensions, we would simply have:
$$ v_h(\mathbf{\tilde x}) = [\bm{\phi}^{\text{SB}}(\mathbf{\tilde x})]^T\, \mathbf{u} + u_D, $$
where $\mathbf{x} = (x, y)$ and $\bm{\phi}(\mathbf{x})$ are now the two-dimensional basis functions.

\section{ROD as a polynomial correction} \label{sec:RODcorrection}

The Reconstruction for Off-site Data (ROD) method is a minimization-based high order embedded boundary treatment that can be used to impose Dirichlet boundary conditions on the computational boundary.
The main idea of this method consists in reconstructing the solution at the computational boundary by using a constrained minimization problem that creates a modified polynomial $v_h$ that is as close as possible to the internal polynomial $u_h$ and strictly satisfies the Dirichlet condition on the real boundary.

Considering the one-dimensional configuration, the ROD correction can be summarized as follows.
The same notation for the real boundary $\bar x$ and the computational boundary $\tilde x$ is used as in section \ref{sec:SBcorrection}.
Let $u_h$ be the polynomial approximation of degree $p$ in the boundary element.
A modified polynomial $v_h$ is defined as:
\begin{equation}
    v_h(x) = \sum_{m=0}^p \phi^i_m(x) v_{m} = \bm{\phi}^T(x) \vb{v},
\end{equation}
where the coefficients $\mathbf{v} = [v_{0}, \ldots, v_{p}]^T$ are determined by minimizing the distance to $\mathbf{u}$  subject to the boundary constraint $v_h(\bar x)=u_D$. 

More generally the ROD reconstruction consists in finding the polynomial coefficients $\mathbf{v}$ by minimizing the cost function $\mathcal{J}(\mathbf{v}) = \mathcal{D}(\mathbf{v}, \mathbf{u})$ subject to linear constraints $\mathcal{G}(\mathbf{v})=0$ (the boundary conditions).

It can be noticed that the distance $\mathcal{D}$ can be chosen in different ways as long as it stays a convex function.
The simplest choice, as also followed in \cite{santos2024very,ciallella2024very}, constists in taking the Euclidean distance, meaning 
$$ \mathcal{D}^E(\mathbf{v}, \mathbf{u}) =  \frac12 \|\mathbf{v} - \mathbf{u} \|_2^2. $$
The ROD method using the $\mathcal{D}^E$ will be referred to as ROD-E. 

Herein, we also show a possible alternative to the ROD-E, which is retrieved by taking the distance based on the $L^2$ norm that reads
$$ \mathcal{D}^{L^2}(\mathbf{v}, \mathbf{u}) =  \frac12 \|\mathbf{v} - \mathbf{u} \|_{L^2}^2. $$
For this formulation, the method label will be ROD-$L^2$.

In the following, we show first how it is possible to write the ROD method as a polynomial correction when only one boundary constraint is imposed. 
Finally, we show that it is possible to obtain a similar formulation also when multiple boundary constraints are imposed on the polynomial $v_h$, as it was done in \cite{ciallella2024very,santos2024very}.

\subsection{Euclidean norm as cost function}\label{sec:RODE}

In this section, we start by recasting the ROD-E method, based on the Euclidean distance, as a polynomial correction. 
In this case, the cost function to be minimized reads:
\begin{equation}
    \mathcal{J}(\mathbf{v}) = \mathcal{D}^E(\mathbf{v}, \mathbf{u}) = \frac12 \sum_{j=0}^p (v_j - u_j)^2 .
\end{equation}
The new ROD polynomial $v_h$ must also ensure that the Dirichlet condition on the real boundary is satisfied, meaning $v_h(\bar x) = u_D$.

The constrained minimization problem can be solved by introducing a Lagrange multiplier $\lambda$ and by considering the following functional:
\begin{equation}
    \mathcal{L}(\mathbf{v} , \lambda) = \mathcal{J}(\mathbf{v} ) + \lambda ( v_h(\bar x) - u_D ) .
\end{equation}
The optimality conditions for this problem, in vector form, can be written as
\begin{equation}
    \frac{\partial \mathcal{L}}{\partial \vb{v}} = \vb{v} - \vb{u} + \lambda  \bm{\phi}(\bar x) = \vb{0} , \qquad \frac{\partial \mathcal{L}}{\partial \lambda} = \bm{\phi}^T(\bar x) \vb{v} - u_D = 0
\end{equation}
If one wanted to solve this in a system form, the following linear system would need to be inverted:
\begin{equation}\label{eq:RODsystem}
\begin{bmatrix} I & \bm{\phi}(\bar x) \\ \bm{\phi}^T(\bar x) & 0 \end{bmatrix} \begin{bmatrix}\vb{v} \\ \lambda \end{bmatrix} = \begin{bmatrix} \vb{u} \\ u_D \end{bmatrix} 
\end{equation}
By solving this system, the polynomial coefficients $\vb{v}$ are obtained and $v_h(\tilde x)$ can be directly computed.

\begin{proposition}
Computing the ROD-E modified boundary condition by inverting the linear system \eqref{eq:RODsystem} and evaluating the polynomial $v_h$ in $\tilde x$ is equivalent to the following polynomial correction:
\begin{equation}\label{eq:rod_final}
    v_h(\tilde x) = [\bm{\phi}^{\text{ROD-E}}(\tilde x)]^T \mathbf{u} + \alpha^{\text{ROD-E}}(\tilde x) u_D , 
\end{equation}
where 
\begin{equation}
    \alpha^{\text{ROD-E}}(\tilde x) = \frac{\bm{\phi}(\tilde{x})^T \bm{\phi}(\bar{x})}{\bm{\phi}(\bar{x})^T \bm{\phi}(\bar{x})} \quad\text{ and }\quad \bm{\phi}^{\text{ROD-E}}(\tilde x) = \bm{\phi}(\tilde x) - \alpha^{\text{ROD-E}}(\tilde x) \bm{\phi}(\bar x).   
\end{equation}
\end{proposition}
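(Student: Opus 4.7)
The plan is to solve the KKT system \eqref{eq:RODsystem} by block elimination, obtain a closed-form expression for $\mathbf{v}$, and then evaluate the resulting polynomial at $\tilde{x}$ to match the claimed correction formula.

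First I would use the first block row of \eqref{eq:RODsystem}, namely $\mathbf{v} + \lambda\,\bm{\phi}(\bar{x}) = \mathbf{u}$, to express the modal coefficients explicitly as $\mathbf{v} = \mathbf{u} - \lambda\,\bm{\phi}(\bar{x})$. Substituting this into the second block row (the constraint $\bm{\phi}^T(\bar{x})\mathbf{v} = u_D$) yields a scalar equation in $\lambda$, which can be solved directly since $\bm{\phi}^T(\bar{x})\bm{\phi}(\bar{x})$ is strictly positive (the basis cannot vanish identically at $\bar{x}$). This gives
\begin{equation*}
\lambda = \frac{\bm{\phi}^T(\bar{x})\mathbf{u} - u_D}{\bm{\phi}^T(\bar{x})\bm{\phi}(\bar{x})}.
\end{equation*}

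Next I would reinsert this Lagrange multiplier into the formula for $\mathbf{v}$ and compute the boundary reconstruction $v_h(\tilde{x}) = \bm{\phi}^T(\tilde{x})\mathbf{v}$. Expanding gives
\begin{equation*}
v_h(\tilde{x}) = \bm{\phi}^T(\tilde{x})\mathbf{u} - \frac{\bm{\phi}^T(\tilde{x})\bm{\phi}(\bar{x})}{\bm{\phi}^T(\bar{x})\bm{\phi}(\bar{x})}\,\bm{\phi}^T(\bar{x})\mathbf{u} + \frac{\bm{\phi}^T(\tilde{x})\bm{\phi}(\bar{x})}{\bm{\phi}^T(\bar{x})\bm{\phi}(\bar{x})}\, u_D.
\end{equation*}
Identifying the scalar coefficient in front of $u_D$ as $\alpha^{\text{ROD-E}}(\tilde{x})$ and grouping the two $\mathbf{u}$-dependent terms into $[\bm{\phi}(\tilde{x}) - \alpha^{\text{ROD-E}}(\tilde{x})\bm{\phi}(\bar{x})]^T\mathbf{u}$ produces exactly \eqref{eq:rod_final} with the stated definition of $\bm{\phi}^{\text{ROD-E}}(\tilde{x})$.

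I do not anticipate a significant obstacle: the KKT system has a $(p+2)\times(p+2)$ bordered structure where the $(1,1)$ block is the identity, so block elimination is essentially trivial and invertibility reduces to the non-degeneracy condition $\bm{\phi}^T(\bar{x})\bm{\phi}(\bar{x}) \neq 0$. The only point worth mentioning for rigor is to justify briefly why the stationary point of the Lagrangian is the unique minimizer of the constrained problem, which follows from the strict convexity of $\mathcal{J}$ (its Hessian is $I$) together with the linearity of the constraint $\bm{\phi}^T(\bar{x})\mathbf{v} = u_D$, so that the KKT conditions are both necessary and sufficient and the solution computed above coincides with the output of the ROD-E procedure.
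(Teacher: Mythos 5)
Your proposal is correct and follows essentially the same route as the paper's proof: eliminate $\mathbf{v} = \mathbf{u} - \lambda\,\bm{\phi}(\bar{x})$ from the first optimality condition, solve the constraint for $\lambda$, reinsert, and evaluate $v_h$ at $\tilde{x}$ to read off $\alpha^{\text{ROD-E}}$ and $\bm{\phi}^{\text{ROD-E}}$. Your brief remarks on the positivity of $\bm{\phi}^T(\bar{x})\bm{\phi}(\bar{x})$ and the sufficiency of the KKT conditions via strict convexity are a minor added rigor the paper leaves implicit, but the argument is the same.
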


\begin{proof}
The solution of this minimization problem can be found starting from the first optimality condition:
\begin{equation}
    \frac{\partial \mathcal{L}}{\partial \vb{v}} = \vb{v} - \vb{u} + \lambda  \bm{\phi}(\bar x) = \vb{0} \quad\Longleftrightarrow\quad \vb{v} = \vb{u} - \lambda  \bm{\phi}(\bar x) .
\end{equation}
By substituting this expression into the second optimality condition, the following equation for $\lambda$ is obtained:
\begin{equation}
    \frac{\partial \mathcal{L}}{\partial \lambda} = \bm{\phi}^T(\bar x) \vb{v} - u_D  = \bm{\phi}^T(\bar x) [\vb{u} - \lambda  \bm{\phi}(\bar x)]  - u_D = 0 ,
\end{equation}
which can be rearranged to give:
\begin{equation}
    \lambda = \frac{\bm{\phi}^T(\bar x) \vb{u} - u_D}{\bm{\phi}^T(\bar x)\bm{\phi}(\bar x)} .
\end{equation}
Finally, the modified degrees of freedom of the ROD-corrected polynomial read:
\begin{equation}
\vb{v} = \vb{u} - \left( \frac{\bm{\phi}(\bar{x})^T \vb{u} - u_D}{\bm{\phi}(\bar{x})^T \bm{\phi}(\bar{x})} \right) \bm{\phi}(\bar{x})
\end{equation}
The modified polynomial $v_h$ is then used to impose the Dirichlet boundary condition at the computational boundary through the numerical flux $\hat u (v_h(\tilde x),u^+_h)= v_h(\tilde x)$, for an upwind flux.

In order to write ROD as a polynomial correction, it is necessary to consider the evaluation of the $v_h$ in the computational boundary point $\tilde x$, which gives
\begin{equation}
    v_h(\tilde{x}) = \bm{\phi}(\tilde{x})^T \vb{v} = \bm{\phi}(\tilde{x})^T \vb{u} - \left( \frac{\bm{\phi}(\bar{x})^T \vb{u} - u_D}{\bm{\phi}(\bar{x})^T \bm{\phi}(\bar{x})} \right) \bm{\phi}(\tilde{x})^T \bm{\phi}(\bar{x}).
\end{equation}
It is now easy to infer that the ROD modified boundary condition can be recast as the polynomial correction \eqref{eq:rod_final}.

\end{proof}
\begin{remark}[Efficiency of ROD polynomial correction]
It should be noticed that by writing ROD-E as in equation \eqref{eq:rod_final}, the reconstructed value $v_h(\tilde x)$ is expressed as a linear combination of the internal degrees of freedom $\vb{u}$ and a set of modified basis functions $\bm{\phi}^{\text{ROD-E}}(\tilde x)$, which only need to be precomputed once, if the boundary is not moving. No inversion of a linear system is needed.
\end{remark}

\begin{remark}[Similarities with the SB polynomial correction]
As a matter of fact, by writing ROD-E as in equation \eqref{eq:rod_final}, it is possible to make a direct link with the SB correction \eqref{eq:SBcorrection}.
Simply, the SB polynomial correction is retrieved if $\alpha^{\text{ROD-E}}(\tilde x) = 1$. 
\end{remark}

\begin{remark}[Consistency with body fitted boundary conditions]
Consistency is also maintained, as for body-fitted boundary conditions, $d=0$ and $\bar x = \tilde x$.
It is easily shown that this brings to $\alpha^{\text{ROD-E}}(\tilde x)=1$ and $\bm{\phi}^{\text{ROD-E}}(\tilde x)=\vb{0}$, which eventually gives $v_h(\tilde x)=u_D$.
\end{remark}

\subsection{$L^2$ norm as cost function}\label{sec:RODL2}

The ROD-$L^2$ method reconstructs a modified polynomial $v_h$ that minimizes the $L^2$ distance with the internal polynomial $u_h$, while satisfying the Dirichlet condition on the real boundary.

In this case, the cost function reads:
\[
J(\vb{v}) =  \mathcal{D}^{L^2}(\mathbf{v}, \mathbf{u}) = \frac{1}{2} \int_{\Omega_b} (v_h(x) - u_h(x))^2 \text{d}x
\]

Following the same procedure as above, the constrained minimization problem can be solved introducing a Lagrange multiplier:
\[
\mathcal{L}(\vb{v}, \lambda) = \frac{1}{2} \int_{\Omega_b} (v_h(x) - u_h(x))^2 \text{d}x + \lambda(v_h(\bar{x}) - u_D)
\]
In this case, the first optimality condition gives:
\[
\frac{\partial \mathcal{L}}{\partial v_k} = \int_{\Omega_b} (v_h(x) - u_h(x)) \phi_k(x) \text{d}x + \lambda \phi_k(\bar{x}) = 0, \quad k = 0,\dots,p
\]

Substituting the polynomial expansions, the mass matrix appears in the first optimality condition. The problem to solve can be rewritten in the following matrix form:
\[
\frac{\partial \mathcal{L}}{\partial \vb{v}} = M(\vb{v} - \vb{u}) + \lambda \bm{\phi}(\bar{x}) = \vb{0},  \qquad \frac{\partial \mathcal{L}}{\partial \lambda} = \bm{\phi}(\bar{x})^T \vb{v} - u_D = 0 .
\]
Again, by solving the following linear system, 
\begin{equation}\label{eq:RODL2system}
\begin{bmatrix} M & \bm{\phi}(\bar x) \\ \bm{\phi}^T(\bar x) & 0 \end{bmatrix} \begin{bmatrix}\vb{v} \\ \lambda \end{bmatrix} = \begin{bmatrix} M \vb{u} \\ u_D \end{bmatrix} 
\end{equation}
the coefficients $\vb{v}$ are obtained, and the evaluated polynomial $v_h(\tilde x)$ can be computed.

\begin{proposition}
Computing the ROD-$L^2$ modified boundary condition by inverting the linear system \eqref{eq:RODL2system} and evaluating the polynomial $v_h$ in $\tilde x$ is equivalent to the following polynomial correction:
\begin{equation}\label{eq:rodL2_final}
    v_h(\tilde x) = [\bm{\phi}^{\text{ROD-$L^2$}}(\tilde x)]^T \mathbf{u} + \alpha^{\text{ROD-$L^2$}}(\tilde x) u_D , 
\end{equation}
where 
\begin{equation}
    \alpha^{\text{ROD-$L^2$}}(\tilde{x}) = \frac{\bm{\phi}(\tilde{x})^T M^{-1} \bm{\phi}(\bar{x})}{\bm{\phi}(\bar{x})^T M^{-1} \bm{\phi}(\bar{x})} \quad\text{ and }\quad \bm{\phi}^{\text{ROD-$L^2$}}(\tilde x) = \bm{\phi}(\tilde x) - \alpha^{\text{ROD-$L^2$}}(\tilde x) \bm{\phi}(\bar x).   
\end{equation}

\end{proposition}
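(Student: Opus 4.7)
The plan is to mirror the derivation used for the Euclidean case in Section~\ref{sec:RODE}, with the mass matrix $M$ now playing the role of the identity in the gradient of the cost. The structural equivalence of the two optimality systems \eqref{eq:RODsystem} and \eqref{eq:RODL2system} strongly suggests that the same elimination-of-multiplier strategy will produce the analogous polynomial correction, with $M^{-1}$ appearing whenever $\bm{\phi}(\bar x)$ is ``inverted against'' itself.

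First I would solve the first optimality equation explicitly for $\vb{v}$. Since $M$ is SPD (as a Galerkin mass matrix on $\Omega_b$) and thus invertible, I obtain $\vb{v} = \vb{u} - \lambda M^{-1} \bm{\phi}(\bar x)$. Next I would substitute this expression into the constraint equation $\bm{\phi}(\bar x)^T \vb{v} = u_D$ and solve the resulting scalar equation for the Lagrange multiplier,
\begin{equation*}
    \lambda = \frac{\bm{\phi}(\bar x)^T \vb{u} - u_D}{\bm{\phi}(\bar x)^T M^{-1} \bm{\phi}(\bar x)}.
\end{equation*}
The denominator is nonzero whenever $\bm{\phi}(\bar x) \neq \vb{0}$, because $M^{-1}$ inherits positive definiteness from $M$; this is the only well-posedness check needed.

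Finally, I would evaluate the reconstructed polynomial at the computational boundary, $v_h(\tilde x) = \bm{\phi}(\tilde x)^T \vb{v}$, and substitute both the expression for $\vb{v}$ and the value of $\lambda$ obtained above. Collecting the terms multiplying $\vb{u}$ and the terms multiplying $u_D$ yields directly
\begin{equation*}
    v_h(\tilde x) = \bm{\phi}(\tilde x)^T \vb{u} - \frac{\bm{\phi}(\tilde x)^T M^{-1} \bm{\phi}(\bar x)}{\bm{\phi}(\bar x)^T M^{-1} \bm{\phi}(\bar x)} \bigl( \bm{\phi}(\bar x)^T \vb{u} - u_D \bigr),
\end{equation*}
which, after identifying the scalar multiplier as $\alpha^{\text{ROD-}L^2}(\tilde x)$ and grouping, is precisely the claimed form \eqref{eq:rodL2_final} with the stated modified basis $\bm{\phi}^{\text{ROD-}L^2}(\tilde x)$.

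I do not expect any genuine obstacle here: the derivation is essentially a templated repetition of the ROD-E proof, the only substantive change being that the inner product induced by the identity is replaced by the one induced by $M^{-1}$. I would briefly note, as remarks parallel to those in Section~\ref{sec:RODE}, that setting $M = I$ recovers ROD-E, that consistency with body-fitted boundary conditions ($\bar x = \tilde x$) still yields $\alpha^{\text{ROD-}L^2}(\tilde x) = 1$ and $\bm{\phi}^{\text{ROD-}L^2}(\tilde x) = \vb{0}$, and that the SB correction \eqref{eq:SBcorrection} is again the special case $\alpha^{\text{ROD-}L^2}(\tilde x) = 1$.
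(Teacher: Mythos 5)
Your proposal matches the paper's proof essentially step for step: eliminate $\vb{v} = \vb{u} - \lambda M^{-1}\bm{\phi}(\bar x)$ from the first optimality condition, solve the constraint for $\lambda$, evaluate $v_h(\tilde x) = \bm{\phi}(\tilde x)^T \vb{v}$, and identify the correction form \eqref{eq:rodL2_final}. The added well-posedness remark on the denominator and the consistency observations are harmless extras; the argument is correct and identical in approach.
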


\begin{proof}

From the first optimality condition, we obtain the following relation for $\vb{v}$: 
\[
\vb{v} = \vb{u} - \lambda M^{-1} \bm{\phi}(\bar{x}).
\]
Replacing $\vb{v}$ into the second optimality condition and solving for $\lambda$ gives:
\[
\bm{\phi}(\bar{x})^T [\vb{u} - \lambda M^{-1} \bm{\phi}(\bar{x})] - u_D = 0 \qquad\Longleftrightarrow\qquad \lambda = \frac{\bm{\phi}(\bar{x})^T \vb{u} - u_D}{\bm{\phi}(\bar{x})^T M^{-1} \bm{\phi}(\bar{x})}
\]
In this case, the modified degrees of freedom are:
\[
\vb{v} = \vb{u} - \left( \frac{\bm{\phi}(\bar{x})^T \vb{u} - u_D}{\bm{\phi}(\bar{x})^T M^{-1} \bm{\phi}(\bar{x})} \right) M^{-1} \bm{\phi}(\bar{x})
\]
As it was done above, in order to write the ROD-$L^2$ as a polynomial correction, it is necessary to evaluate $v_h$ at the computational boundary $\tilde{x}$:
\begin{equation}\label{eq:rodL2poly}
\begin{split}
    v_h(\tilde{x}) = \bm{\phi}(\tilde{x})^T \vb{v} = \bm{\phi}(\tilde{x})^T \vb{u} - \left( \frac{\bm{\phi}(\bar{x})^T \vb{u} - u_D}{\bm{\phi}(\bar{x})^T M^{-1} \bm{\phi}(\bar{x})} \right) \bm{\phi}(\tilde{x})^T M^{-1} \bm{\phi}(\bar{x}).
\end{split}
\end{equation}
From equation \eqref{eq:rodL2poly}, it is direct to recast the ROD-$L^2$ method as the polynomial correction \eqref{eq:rodL2_final}. 

\end{proof}

\subsection{General weight matrix in cost function}\label{sec:RODW}

This section aims at providing a general framework that encompasses all possibilities considered above.

The ROD method with a general weight matrix reconstructs a modified polynomial $v_h$ that minimizes a weighted distance with the internal polynomial $u_h$, while satisfying the Dirichlet condition on the real boundary.

The cost function now depends on the general symmetric positive definite matrix $W$:
\[
\mathcal{J}(\mathbf{v}) = \frac{1}{2} (\mathbf{v} - \mathbf{u})^T W (\mathbf{v} - \mathbf{u}).
\]
The constrained minimization problem is solved using the Lagrangian:
\[
\mathcal{L}(\mathbf{v}, \lambda) = \frac{1}{2} (\mathbf{v} - \mathbf{u})^T W (\mathbf{v} - \mathbf{u}) + \lambda(v_h(\bar{x}) - u_D)
\]
Following the same steps as above, the ROD-W corrected boundary value can be expressed as:
\[
    v_h(\tilde x) = [\bm{\phi}^{\text{ROD-W}}(\tilde x)]^T \mathbf{u} + \alpha^{\text{ROD-W}}(\tilde x) u_D , 
\]
where
\[
\alpha^{\text{ROD-W}}(\tilde{x}) = \frac{\bm{\phi}(\tilde{x})^T W^{-1} \bm{\phi}(\bar{x})}{\bm{\phi}(\bar{x})^T W^{-1} \bm{\phi}(\bar{x})} \quad\text{ and }\quad \bm{\phi}^{\text{ROD-W}}(\tilde x) = \bm{\phi}(\tilde x) - \alpha^{\text{ROD-W}}(\tilde x) \bm{\phi}(\bar x).
\]
It is straightforward to show that, when $W = I$ (identity matrix), we recover the ROD-E method.
Instead, when $W = M$ (mass matrix), we recover the ROD-$L^2$ method.

The SB correction can be interpreted as a ROD method with a specific weight matrix $W_{\text{SB}}$ that satisfies:
\[
\frac{\bm{\phi}(\tilde{x})^T W^{-1}_{\text{SB}} \bm{\phi}(\bar{x})}{\bm{\phi}(\bar{x})^T W^{-1}_{\text{SB}} \bm{\phi}(\bar{x})} = 1
\]
The corresponding weight matrix can be computed as:
\[
\bm{\phi}(\tilde{x})^T W^{-1}_{\text{SB}} \bm{\phi}(\bar{x})= \bm{\phi}(\bar{x})^T W^{-1}_{\text{SB}} \bm{\phi}(\bar{x}) \quad \Longleftrightarrow \quad \bm{\delta}^T W^{-1}_{\text{SB}} \bm{\phi}(\bar{x}) = 0
\]
where $\bm{\delta} = \bm{\phi}(\tilde{x}) - \bm{\phi}(\bar{x})$.

Although rather unlikely in actual problems, if $W^{-1}_{\text{SB}}$ would have by chance the following shape:
$$W^{-1}_{\text{SB}} = I - \frac{\bm{\delta}\bm{\delta}^T}{\bm{\delta}^T\bm{\delta}},$$ 
the ROD method would coincide with the SB method.
As a matter of fact, after simple computations, this matrix gives
$$ \bm{\delta}^T \left(I - \frac{\bm{\delta}\bm{\delta}^T}{\bm{\delta}^T\bm{\delta}}\right) \bm{\phi}(\bar{x}) = \bm{\delta}^T \bm{\phi}(\bar{x}) - \bm{\delta}^T \frac{\bm{\delta}\bm{\delta}^T}{\bm{\delta}^T\bm{\delta}} \bm{\phi}(\bar{x}) = 0. $$

\subsection{ROD as a polynomial correction with multiple constraints} \label{ref:ROD2D}

Although the analysis performed in this work is done on one-dimensional domains, the formulation of the ROD method as a polynomial correction can be expressed in a similar way also for multi-dimensional problems, meaning where there are more constraints for each boundary cell. Indeed, when working in multiple dimensions, the boundary term is no longer a single point as in 1D, but it is the surface integral computed on the computational boundary, which is in general defined as the union of the closest mesh edges, of the starting (easily generated) mesh, to the real embedded boundary . 
As it was shown on recent works \cite{santos2024very,ciallella2024very}, where ROD is coupled with DG, in general a single ROD polynomial for each boundary cell is found by introducing several costraints in the ROD minimization problem. Then, this polynomial is used to reconstruct the corrected value in each quadrature point of the edge integral, performed on the computational boundary. 

The following multi-dimensional formulation is presented under the name ROD2D method, and focuses on the Euclidean and $L^2$ norms. 
However, it should be noticed that the same considerations are also valid for 3D problems. 
In this case, the minimization problem can be written as follows.
Let $u_h$ be the polynomial approximation of degree $p$ in the boundary element $\Omega_b$.
A modified polynomial $v_h$ is defined as:
\begin{equation}
    v_h(\vb{x}) = \bm{\phi}^T(\vb{x}) \vb{v},
\end{equation}
where the $(p+1)^2$ coefficients $\mathbf{v} = [v_{0}, \ldots, v_{(p+1)^2}]^T$ are determined by minimizing the distance to $\mathbf{u}$ subject to the boundary constraints $v_h(\bar{\vb{x}}_k)=u_D(\vb{\bar x}_k)$ for $k=1,\ldots,K$,
where $\vb{x}=(x,y)$. 
Let us take $\mathbf{u}_D=[u_D(\bar x_1,\bar y_1),\ldots, u_D(\bar x_K,\bar y_K)]$ where $K$ is the number of constraints in each boundary cell.
Following the same notation as above, $\bar{\vb{x}}$ and $\tilde{\vb{x}}$ refer to points on the real and computational boundary, respectively.

When considering multiple constraints, the functional of the ROD2D-E method becomes:
\begin{equation}
\mathcal{L}(\mathbf{v}, \bm{\lambda}) = \mathcal{D}^{E}(\mathbf{v},\mathbf{u}) + \sum_{k=1}^K \lambda_k \left( v_h(\bar{\vb{x}}_k) - u_D(\bar{\vb{x}}_k) \right)
\end{equation}
where $\bm{\lambda} = [\lambda_1, \ldots, \lambda_K]^T$ is the vector of Lagrange multipliers.
In this case, the optimality conditions gives:
\begin{equation}
\frac{\partial \mathcal{L}}{\partial \vb{v}} = \vb{v} - \vb{u} + \Phi(\bar{\vb{x}}) \bm{\lambda}  = 0, \qquad \frac{\partial \mathcal{L}}{\partial \bm{\lambda}} = \Phi^T(\bar{\vb{x}})\vb{v}  - \vb{u}_D = 0
\end{equation}
where $[\Phi(\bar{\vb{x}})]_{j k} = \phi_j(\vb{\bar x}_k)$.

In earlier works related to the development of ROD for discontinuous Galerkin \cite{santos2024very,ciallella2024very}, the following linear system is solved to obtain the coefficients $\vb{v}$:
\begin{equation}\label{eq:RODE2Dsystem}
\begin{bmatrix} I & \Phi(\bar{\vb{x}}) \\ \Phi^T(\bar{\vb{x}}) & 0 \end{bmatrix} \begin{bmatrix}\vb{v} \\ \bm{\lambda} \end{bmatrix} = \begin{bmatrix} \vb{u} \\ \vb{u}_D \end{bmatrix}, 
\end{equation}
Then,  the coefficients $\vb{v}$ can be used to compute the 2D ROD boundary condition by evaluating it at $\tilde{\vb{x}}$.

\begin{proposition}\label{prop:ROD2DE}
Computing the ROD2D-E modified boundary condition in multiple space dimension by inverting the linear system \eqref{eq:RODE2Dsystem} and evaluating the polynomial $v_h$ in $\tilde{\vb{x}}$ is equivalent to the following polynomial correction:
\begin{equation}\label{eq:rod2DE_final}
v_h(\tilde{\vb{x}}) = [\bm{\phi}^{\text{ROD2D-E}}(\tilde{\vb{x}})]^T \mathbf{u} + [\bm{\alpha}^{\text{ROD2D-E}}]^T  \mathbf{u}_D
\end{equation}
where
\begin{equation}
    \begin{split}
        [\bm{\alpha}^{\text{ROD2D-E}}]^T &= \bm{\phi}^T(\tilde{\vb{x}}) \Phi(\bar{\mathbf{x}}) (\Phi^T(\bar{\vb{x}})\Phi(\bar{\vb{x}}))^{-1}, \\
        [\bm{\phi}^{\text{ROD2D-E}}(\tilde{\vb{x}})] &= \bm{\phi}(\tilde{\vb{x}}) -  \Phi(\bar{\mathbf{x}}) \bm{\alpha}^{\text{ROD2D-E}}.
    \end{split}
\end{equation}

\end{proposition}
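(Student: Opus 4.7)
The plan is to follow the same Lagrangian elimination strategy used in the 1D ROD-E proof in Section \ref{sec:RODE}, but now with a vector-valued Lagrange multiplier $\bm{\lambda}\in\R^K$ and the matrix $\Phi(\bar{\vb{x}})$ replacing the single column $\bm{\phi}(\bar{x})$. The key observation is that the optimality system has exactly the same block structure as \eqref{eq:RODsystem}, so the two-step reduction carries over essentially unchanged.

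First I would solve the first optimality condition $\vb{v} - \vb{u} + \Phi(\bar{\vb{x}})\bm{\lambda} = \vb{0}$ explicitly for $\vb{v}$, obtaining $\vb{v} = \vb{u} - \Phi(\bar{\vb{x}})\bm{\lambda}$. Substituting this into the second optimality condition $\Phi^T(\bar{\vb{x}})\vb{v} = \vb{u}_D$ yields a (small) linear system for the multipliers, $\Phi^T(\bar{\vb{x}})\Phi(\bar{\vb{x}})\,\bm{\lambda} = \Phi^T(\bar{\vb{x}})\vb{u} - \vb{u}_D$. Assuming the constraints are linearly independent, so that $\Phi(\bar{\vb{x}})$ has full column rank and the Gram matrix $\Phi^T(\bar{\vb{x}})\Phi(\bar{\vb{x}})$ is invertible, I can invert to obtain $\bm{\lambda} = (\Phi^T(\bar{\vb{x}})\Phi(\bar{\vb{x}}))^{-1}[\Phi^T(\bar{\vb{x}})\vb{u} - \vb{u}_D]$.

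Back-substitution gives the modified coefficients $\vb{v}$, and the final step is to evaluate $v_h$ at the computational boundary point, $v_h(\tilde{\vb{x}}) = \bm{\phi}^T(\tilde{\vb{x}})\vb{v}$. Grouping the terms proportional to $\vb{u}$ and $\vb{u}_D$ and identifying $[\bm{\alpha}^{\text{ROD2D-E}}]^T = \bm{\phi}^T(\tilde{\vb{x}})\Phi(\bar{\vb{x}})(\Phi^T(\bar{\vb{x}})\Phi(\bar{\vb{x}}))^{-1}$, the representation \eqref{eq:rod2DE_final} follows directly, with the modified basis $\bm{\phi}^{\text{ROD2D-E}}(\tilde{\vb{x}}) = \bm{\phi}(\tilde{\vb{x}}) - \Phi(\bar{\vb{x}})\bm{\alpha}^{\text{ROD2D-E}}$ appearing naturally as the coefficient vector multiplying $\vb{u}$. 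As a consistency check, setting $K=1$ collapses $\Phi$ to the single column $\bm{\phi}(\bar{x})$, the Gram matrix to the scalar $\bm{\phi}^T(\bar{x})\bm{\phi}(\bar{x})$, and recovers the 1D ROD-E correction \eqref{eq:rod_final} exactly.

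The only nontrivial point, and the main obstacle worth flagging, is the invertibility of $\Phi^T(\bar{\vb{x}})\Phi(\bar{\vb{x}})$. In the 1D single-constraint case this is automatic, since it reduces to the strictly positive scalar $\bm{\phi}^T(\bar{x})\bm{\phi}(\bar{x})$; in the multi-constraint setting it requires $K \leq (p+1)^2$ together with a geometric condition on the constraint locations $\bar{\vb{x}}_k$ along the true boundary, so that the associated evaluation functionals are linearly independent on the local polynomial space. This is precisely the well-posedness hypothesis already required for the original saddle-point system \eqref{eq:RODE2Dsystem} to be solvable, so no new assumption is introduced, and under it the polynomial correction \eqref{eq:rod2DE_final} is well-defined and equivalent to solving the minimization problem.
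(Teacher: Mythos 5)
Your proposal is correct and follows essentially the same route as the paper's proof: eliminate $\vb{v}$ from the first optimality condition, solve the reduced Gram system $\Phi^T(\bar{\vb{x}})\Phi(\bar{\vb{x}})\bm{\lambda} = \Phi^T(\bar{\vb{x}})\vb{u} - \vb{u}_D$ for the multipliers, back-substitute, and evaluate at $\tilde{\vb{x}}$ to read off the correction. Your explicit flagging of the full-column-rank condition on $\Phi(\bar{\vb{x}})$ is a sensible addition, though as you note it is just the well-posedness hypothesis already implicit in solving \eqref{eq:RODE2Dsystem}.
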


\begin{proof}

The optimality conditions provides the following system to solve:
\begin{equation}
\vb{v} = \vb{u} - \Phi(\bar{\vb{x}}) \bm{\lambda}, \qquad  \Phi^T(\bar{\vb{x}})\vb{v}  - \vb{u}_D = 0
\end{equation}
Substituting the expression for $\vb{v}$ into the second optimality condition gives:
\begin{equation}
\Phi^T(\bar{\vb{x}}) \left[\vb{u} - \Phi(\bar{\vb{x}}) \bm{\lambda} \right] - \vb{u}_D =0 \qquad\Longleftrightarrow \qquad \Phi^T(\bar{\vb{x}})\Phi(\bar{\vb{x}}) \bm{\lambda} = \Phi^T(\bar{\vb{x}})\vb{u} - \vb{u}_D
\end{equation}
From the latter, it is possible to compute the Lagrange multipliers:
\begin{equation}
\bm{\lambda} = (\Phi^T(\bar{\vb{x}})\Phi(\bar{\vb{x}}))^{-1} (\Phi^T(\bar{\vb{x}})\vb{u} - \vb{u}_D).
\end{equation}

Finally, the modified polynomial evaluated at the computational boundary $\tilde{\vb{x}}$ can be written as:
\begin{equation}
\begin{split}
    v_h(\tilde{\vb{x}}) &= \bm{\phi}^T(\tilde{\vb{x}}) \vb{v} = \bm{\phi}^T(\tilde{\vb{x}}) \left[ \mathbf{u} - \Phi(\bar{\mathbf{x}}) \bm{\lambda} \right] \\
    & =  \bm{\phi}^T(\tilde{\vb{x}}) \left[ \mathbf{u} - \Phi(\bar{\mathbf{x}})  (\Phi^T(\bar{\vb{x}})\Phi(\bar{\vb{x}}))^{-1}  ( \Phi^T(\bar{\mathbf{x}}) \mathbf{u} - \mathbf{u}_D ) \right] \\
    & = \left[ \bm{\phi}^T(\tilde{\vb{x}}) - \bm{\phi}^T(\tilde{\vb{x}}) \Phi(\bar{\mathbf{x}})  (\Phi^T(\bar{\vb{x}})\Phi(\bar{\vb{x}}))^{-1}  \Phi^T(\bar{\mathbf{x}}) \right] \mathbf{u} +\\ 
    &\qquad\qquad\qquad\qquad\qquad\qquad\qquad\qquad\qquad \bm{\phi}^T(\tilde{\vb{x}}) \Phi(\bar{\mathbf{x}})  (\Phi^T(\bar{\vb{x}})\Phi(\bar{\vb{x}}))^{-1}  \mathbf{u}_D 
\end{split}
\end{equation}

It is straightforward now that the ROD2D-E boundary condition can be recast more elegantely as the polynomial correction \eqref{eq:rod2DE_final}.    
\end{proof}

\begin{remark}
This manipulation allows one to recast the ROD method in multiple space dimensions as a polynomial correction that only needs the inversion of the matrix $\Phi^T(\bar{\vb{x}})\Phi(\bar{\vb{x}})$. This reduces the computational costs usually needed to invert the global system at each time iteration, since this matrix is smaller in dimension. Moreover, it could be computed only once for each element, if the boundary is not moving, at the beginning of the simulation and stored.
If the boundary is moving, $\Phi^T(\bar{\vb{x}})\Phi(\bar{\vb{x}})$ needs to be inverted at each iteration, but it would still be more efficient than inverting the global system \eqref{eq:RODE2Dsystem} since it is much smaller in dimension.
\end{remark}

A similar procedure can be followed to recover the ROD2D-$L^2$ polynomial correction.
In this case the functional is
\begin{equation}
\mathcal{L}(\vb{v}, \bm{\lambda}) = \frac{1}{2} \int_{\Omega_b} (v_h(\vb{x}) - u_h(\vb{x}))^2 \text{d}\vb{x} + \sum_{k=1}^m \lambda_k (v_h(\bar{\vb{x}}_k) - u_D(\bar{\vb{x}}_k)).
\end{equation}
Again, one could directly solve the following linear system to obtain the coefficients $\vb{v}$,
\begin{equation}\label{eq:RODL2-2Dsystem}
\begin{bmatrix} M & \Phi(\bar{\vb{x}}) \\ \Phi^T(\bar{\vb{x}}) & 0 \end{bmatrix} \begin{bmatrix}\vb{v} \\ \bm{\lambda} \end{bmatrix} = \begin{bmatrix} M \vb{u} \\ \vb{u}_D \end{bmatrix} ,
\end{equation}
and then evaluate the polynomial $v_h$ on the computational boundary $\tilde{\vb{x}}$.

\begin{proposition}
Computing the ROD2D-$L^2$ modified boundary condition in multiple space dimension by inverting the linear system \eqref{eq:RODL2-2Dsystem} and evaluating the polynomial $v_h$ in $\tilde{\vb{x}}$ is equivalent to the following polynomial correction:
\begin{equation} \label{eq:rod2DL2_final}
v_h(\tilde{x}) = [\bm{\phi}^{\text{ROD2D-$L^2$}}(\tilde{\vb{x}})]^T \mathbf{u} + [\bm{\alpha}^{\text{ROD2D-$L^2$}}]^T \mathbf{u}_D
\end{equation}
where
\begin{equation}
    \begin{split}
        [\bm{\alpha}^{\text{ROD2D-$L^2$}}]^T &= \bm{\phi}^T(\tilde{\vb{x}})M^{-1} \Phi(\bar{\mathbf{x}}) (\Phi^T(\bar{\vb{x}})M^{-1}\Phi(\bar{\vb{x}}))^{-1}, \\ 
        [\bm{\phi}^{\text{ROD2D-$L^2$}}(\tilde{\vb{x}})] &= \bm{\phi}(\tilde{\vb{x}}) -  \Phi(\bar{\mathbf{x}}) \bm{\alpha}^{\text{ROD2D-$L^2$}}.
    \end{split}
\end{equation}

\end{proposition}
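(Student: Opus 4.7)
The plan is to mirror the argument used for Proposition \ref{prop:ROD2DE}, with the Euclidean weighting replaced by the mass matrix $M$. First I would rewrite the Lagrangian in matrix--vector form: since $v_h(\vb{x}) = \bm{\phi}^T(\vb{x})\vb{v}$ and $M_{ij} = \int_{\Omega_b}\phi_i\phi_j\,\text{d}\vb{x}$, the $L^2$ cost becomes $\frac{1}{2}(\vb{v}-\vb{u})^T M (\vb{v}-\vb{u})$, and taking gradients in $\vb{v}$ and $\bm{\lambda}$ recovers exactly the block system \eqref{eq:RODL2-2Dsystem}. This is the natural multi-constraint analogue of what was done in section \ref{sec:RODL2}.

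Next I would invert the first optimality condition to express the polynomial coefficients as $\vb{v} = \vb{u} - M^{-1}\Phi(\bar{\vb{x}})\bm{\lambda}$, and substitute into the constraint equation $\Phi^T(\bar{\vb{x}})\vb{v} = \vb{u}_D$ to solve for the Lagrange multipliers, obtaining $\bm{\lambda} = (\Phi^T(\bar{\vb{x}}) M^{-1} \Phi(\bar{\vb{x}}))^{-1} (\Phi^T(\bar{\vb{x}})\vb{u} - \vb{u}_D)$. Invertibility of $\Phi^T(\bar{\vb{x}}) M^{-1} \Phi(\bar{\vb{x}})$ follows from positive definiteness of $M$ together with the assumed full column rank of $\Phi(\bar{\vb{x}})$, i.e.\ the same well-posedness hypothesis implicit in the ROD2D-E case.

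The final step is to evaluate the corrected polynomial at the computational boundary via $v_h(\tilde{\vb{x}}) = \bm{\phi}^T(\tilde{\vb{x}})\vb{v}$, then distribute the substitutions and regroup terms according to whether they multiply $\vb{u}$ or $\vb{u}_D$. The resulting coefficients match precisely $\bm{\phi}^{\text{ROD2D-$L^2$}}(\tilde{\vb{x}})$ and $\bm{\alpha}^{\text{ROD2D-$L^2$}}$ from \eqref{eq:rod2DL2_final}. I do not expect any genuine obstacle here: the argument is structurally identical to the proof of Proposition \ref{prop:ROD2DE}, and every step carries over after the substitution $I \to M$ at the appropriate places. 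The only care needed is bookkeeping of $M$ versus $M^{-1}$, since $M$ multiplies $\vb{v}-\vb{u}$ in the stationarity equation while its inverse surfaces in all subsequent expressions for $\bm{\lambda}$, $\vb{v}$, and the final correction vectors.
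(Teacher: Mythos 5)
Your proposal is correct and follows essentially the same route as the paper's proof: derive the stationarity condition $\vb{v} = \vb{u} - M^{-1}\Phi(\bar{\vb{x}})\bm{\lambda}$, solve the constraint for $\bm{\lambda} = (\Phi^T(\bar{\vb{x}})M^{-1}\Phi(\bar{\vb{x}}))^{-1}(\Phi^T(\bar{\vb{x}})\vb{u} - \vb{u}_D)$, and regroup the evaluation $\bm{\phi}^T(\tilde{\vb{x}})\vb{v}$ into the stated correction. Your brief remark on invertibility via positive definiteness of $M$ and full column rank of $\Phi(\bar{\vb{x}})$ is a small explicit addition the paper leaves implicit, but the argument is otherwise identical.
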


\begin{proof}

The optimality conditions provides the following system to solve:
\begin{equation}
\vb{v} = \vb{u} - M^{-1}\Phi(\bar{\vb{x}}) \bm{\lambda}, \qquad  \Phi^T(\bar{\vb{x}})\vb{v}  - \vb{u}_D = 0
\end{equation}
Substituting the expression for $\vb{v}$ into the second optimality condition gives:
\begin{equation}
\Phi^T(\bar{\vb{x}}) \left[\vb{u} - M^{-1}\Phi(\bar{\vb{x}}) \bm{\lambda} \right] - \vb{u}_D =0 \qquad\Longleftrightarrow \qquad \Phi^T(\bar{\vb{x}}) M^{-1} \Phi(\bar{\vb{x}}) \bm{\lambda} = \Phi^T(\bar{\vb{x}})\vb{u} - \vb{u}_D
\end{equation}
From the latter, it is possible to compute the Lagrange multipliers:
\begin{equation}
\bm{\lambda} = (\Phi^T(\bar{\vb{x}}) M^{-1} \Phi(\bar{\vb{x}}))^{-1} (\Phi^T(\bar{\vb{x}})\vb{u} - \vb{u}_D).
\end{equation}

Finally, the modified polynomial evaluated at the computational boundary $\tilde{\vb{x}}$ can be written as:
\begin{equation}
\begin{split}
    v_h(\tilde{\vb{x}}) &= \bm{\phi}^T(\tilde{\vb{x}}) \vb{v} = \bm{\phi}^T(\tilde{\vb{x}}) \left[ \mathbf{u} - M^{-1}\Phi(\bar{\mathbf{x}}) \bm{\lambda} \right] \\
    & =  \bm{\phi}^T(\tilde{\vb{x}}) \left[ \mathbf{u} - M^{-1}\Phi(\bar{\mathbf{x}})  (\Phi^T(\bar{\vb{x}})M^{-1}\Phi(\bar{\vb{x}}))^{-1}  ( \Phi^T(\bar{\mathbf{x}}) \mathbf{u} - \mathbf{u}_D ) \right] \\
    & = \left[ \bm{\phi}^T(\tilde{\vb{x}}) - \bm{\phi}^T(\tilde{\vb{x}}) M^{-1}\Phi(\bar{\mathbf{x}})  (\Phi^T(\bar{\vb{x}}) M^{-1} \Phi(\bar{\vb{x}}))^{-1}  \Phi^T(\bar{\mathbf{x}}) \right] \mathbf{u} + \\
    &\qquad\qquad\qquad\qquad\qquad\qquad \bm{\phi}^T(\tilde{\vb{x}}) M^{-1}\Phi(\bar{\mathbf{x}})  (\Phi^T(\bar{\vb{x}})M^{-1}\Phi(\bar{\vb{x}}))^{-1}  \mathbf{u}_D 
\end{split}
\end{equation}

It is straightforward now that the ROD2D-$L^2$ boundary condition can be recast as the polynomial correction \eqref{eq:rod2DL2_final}. 
\end{proof}

\begin{remark}[ROD 1D correction for multi-D problems]
In this section, we have focused on the multi-constraints ROD approach developed in previous works on ROD-DG methods \cite{santos2024very,ciallella2024very}. However, it should be noticed that by writing the 1D ROD method as the polynomial corrections \eqref{eq:rod_final} and \eqref{eq:rodL2_final}, the ROD method in multiple dimensions could be also developed by employing several of the 1D polynomial corrections applied locally in each quadrature point. 
On one side, this would keep the minimization-based approach of the ROD method also in multiple dimension without the need of including multiple constraints in the reconstruction. On the other, it simplifies even more the implementation and the coding effort needed to integrate it in a general computational framework, making it as efficient as the SB polynomial correction. Future works will be dedicated to test this alternative formulations in multiple space dimensions.        
\end{remark}

\section{Stability analysis}\label{sec:stability}

The eigenspectrum analysis is performed with a minimal system configuration. 
We consider two elements containing $p+1$ modal coefficients each.
Due to Dirichlet boundary conditions the analysis cannot consider an infinite domain, as done for the standard Von Neumann analysis, and will be dependent on the number of elements considered in the global system. 
However, as it was proven in \cite{ciallella2025stability} for a similar configuration, the qualitative behavior of the results remains largely unaffected by the specific number of cells.
This is also typical of other analysis involving boundary conditions that have been presented in the literature \cite{M2AN_2015__49_1_39_0}.
As a matter of fact, the analysis in \cite{ciallella2025stability} of the SB method showed that only the eigenvalues of the boundary cell are affected by the embedded boundary treatment.
To simplify the visualization, for the analysis involving the embedded treatments, the reference maximum CFL number is taken as that of the internal DG scheme, computed assuming a periodic domain. 
The analysis is performed taking $\Delta x =1$.

In section \ref{sec:periodic}, the analysis is performed by considering periodic boundary conditions, and, as expected, this allows us to recover the classical CFL stability constraints of DG schemes. 
In section \ref{sec:RODexplicit}, resp.\ section \ref{sec:RODimplicit}, the analysis of the ROD polynomial correction with explicit, resp.\ implicit, time integration is presented. 

Implicit Euler as time integration is also studied to show whether this can have a positive impact on the stability regions of the schemes, as it was observed in \cite{ciallella2025stability} for the SB method. 

\subsection{Periodic boundary conditions and explicit time integration}\label{sec:periodic}

The DG method, following the notation of equation \eqref{eq:discontinuous_galerkin_matrix_form}, for a system of two elements with periodic boundary conditions imposed at the left interface of the first cell $\Omega_1=[1/2,3/2]$, and at the right interface of the second cell $\Omega_2=[3/2,5/2]$, reads
\begin{equation}\label{eq:system_periodic}
    \begin{split}
	M_1 \frac{\text{d} \mathbf{u}_1}{\text{d} t} &= K^s_1 \mathbf{u}_1 - ( K^R_1 \mathbf{u}_1 - K^L_1 \mathbf{u}_2 ), \\
	M_2 \frac{\text{d} \mathbf{u}_2}{\text{d} t} &= K^s_2 \mathbf{u}_2 - ( K^R_2 \mathbf{u}_2 - K^L_2 \mathbf{u}_1 ).
    \end{split}
\end{equation}
In this case, the global matrices are
$$\mathbf{M} = \begin{bmatrix} M_1 &  0  \\ 0  & M_2 \end{bmatrix}, \quad \text{and}\quad \mathbf{K} = \begin{bmatrix} K^s_1 - K_1^R & K_1^L \\ K_2^L & K^s_2 - K_2^R \end{bmatrix}.$$
Under the assumption of a uniform mesh, the local matrices are identical ($M_1=M_2$ and $K^s_1=K^s_2$) as defined in \eqref{eq:mass_stiff_mat}. 
The interface terms $K^R_1$ and $K^R_2$ follow from \eqref{eq:interface_mat}, and more precisely are given by 
$$K^R_1 =  \phi_i^1(x_{3/2}) \phi_j^1(x_{3/2})  ,\qquad K^L_1 = \phi_i^1(x_{1/2}) \phi_j^{2}(x_{5/2}),$$
and  
$$K^R_2 =  \phi_i^2(x_{5/2}) \phi_j^2(x_{5/2})  ,\qquad K^L_2 = \phi_i^2(x_{3/2}) \phi_j^{1}(x_{3/2}),$$
with $K^L_1$ and $K^L_2$ providing the periodic coupling between cells 1-2 and 2-1, respectively.
The global vector of modal coefficients is $\mathbf{U}(t)=[ \mathbf{u}_1(t) \;  \mathbf{u}_2(t)]^T$. 

As expected, when taking periodic conditions, the standard CFL condition for DG methods is obtained: $\text{CFL}^p_{\text{max}}\approx \frac{1}{2p+1}$. 
For completeness, we show in figure \ref{fig:DG_periodic} the maximum amplification factor for this configuration.
In the following sections, to simplify the visualization of stability regions, a normalized $\text{CFL}\in[0,1]$ is defined by scaling the actual CFL value with the periodic-case CFL such that $\frac{\Delta t}{\Delta x} = \text{CFL}^p_{\text{max}} \cdot \text{CFL}$.

\begin{figure}
\centering
\includegraphics[width=0.6\textwidth]{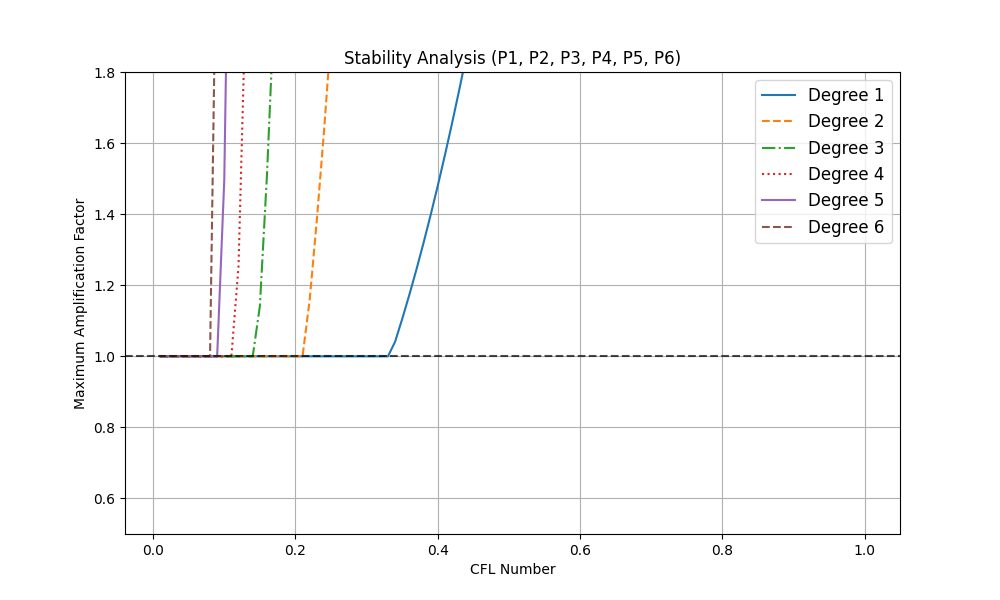}
\caption{Amplification factor: periodic boundary conditions for the explicit discontinuous Galerkin method. When the amplification factor becomes greater than 1, the scheme is unstable.}
\label{fig:DG_periodic}
\end{figure}

\subsection{ROD treatment and explicit time integration}\label{sec:RODexplicit}

The analysis for the ROD boundary treatment can be performed thanks to the reformulations carried out in section \ref{sec:RODcorrection}, which allows one to write easily ROD in matrix form.

In this case, the global system for two cells is
\begin{equation}
\begin{split}
	M_1 \frac{\text{d} \mathbf{u}_1}{\text{d} t} &= K^s_1 \mathbf{u}_1 - ( K^R_1 \mathbf{u}_1 - K^{\text{ROD}}_1 \mathbf{u}_1 ), \\
	M_2 \frac{\text{d} \mathbf{u}_2}{\text{d} t} &= K^s_2 \mathbf{u}_2 - ( K^R_2 \mathbf{u}_2 - K^L_2 \mathbf{u}_1 ),
\end{split}
\end{equation}
where the left interface matrix that introduces the ROD polynomial correction is defined as
$$ K^{\text{ROD}}_1 = \phi^1_i(x_{1/2}) \phi^{\text{ROD},1}_i(x_{1/2}) = \phi^1_i(x_{1/2}) ( \phi^1_j(x_{1/2}) - \alpha^{\text{ROD}}\, \phi^1_j(x_{1/2}+d) ). $$
By varying $\alpha^{\text{ROD}}$, different ROD approaches can be analyzed. 

For the Euclidean distance used for the ROD minimization problem,
$$\alpha^{\text{ROD-E}} =  \frac{\sum_{i=0}^p \phi^1_i(x_{1/2}) \phi^1_i(x_{1/2}+d)}{ \sum_{i=0}^p \phi^1_i(x_{1/2}+d) \phi^1_i(x_{1/2}+d)}, $$
and the stability regions are given in figure \ref{fig:DG_RODE_explicit}.

The ROD-E method with polynomial degree $p=1,2,3$ shows stable areas for all values of $d\in[-1,0]$ under the standard CFL condition, an improvement over the SB method results in \cite{ciallella2025stability}. Instead, for $d\in[0,1]$, the method becomes unstable after a certain threshold $d>d_{max}$, which depends on the polynomial degree used to build the scheme, with the same $d_{max}$ observed for the SB method.
Although the ROD-E method shows better stability properties for $p=1,2,3$ and distance $d\in[-1,0]$ than the SB method, for higher order polynomials $p=4,5,6$, also the ROD-E method becomes unconditionally unstable after a certain threshold.
Contrary to the SB method, the ROD-E method never shows a constraint on the CFL number, it is either stable under a classical CFL condition or it becomes unconditionally unstable after a certain distance threshold.

For the $\mathbb{P}^1$ method, we can analytically compute the eigenvalues of the ROD-E boundary discretized operator, which read
$$
\lambda^{\text{ROD-E}}_{1,2} = \frac{- 3 d^{2} + 5 d  - 2 \pm \sqrt{9 d^{4} - 18 d^{3} + 13 d^{2} - 2 d - 2}}{2 d^{2} - 2 d + 1}.
$$ 
The scheme is stable in the semi-discrete sense when $d<2/3\approx 0.66$, which is the distance threshold after which the scheme becomes unconditionally unstable.

\begin{figure}
\centering
\includegraphics[width=0.98\textwidth,trim={1.5cm 1.5cm 1.5cm 1.5cm},clip]{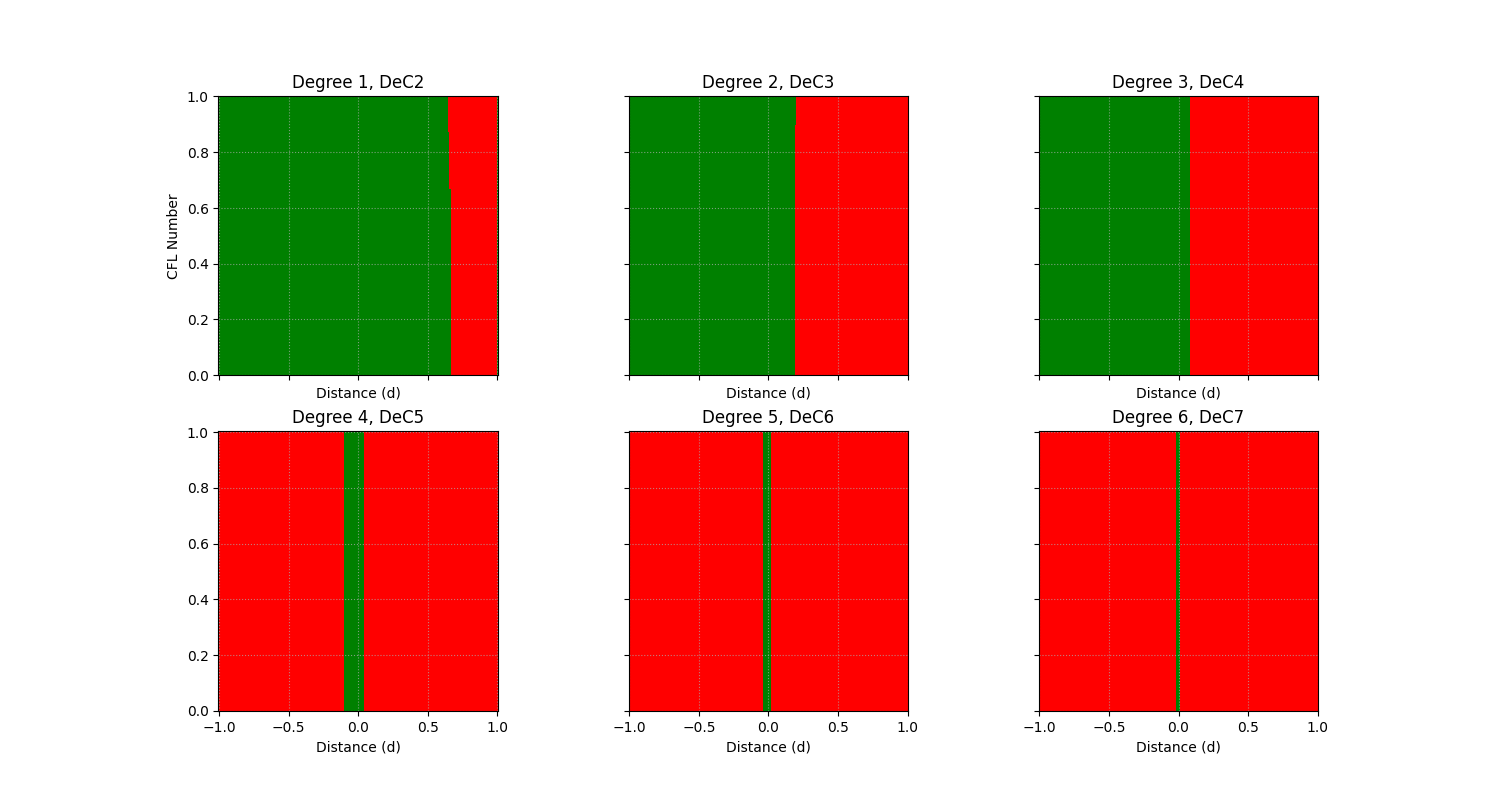}
\caption{Stability region: ROD-E correction of the homogeneous Dirichlet condition for the explicit discontinuous Galerkin method. Green areas are stable, while red areas are unstable.}
\label{fig:DG_RODE_explicit}
\end{figure}

When taking the $L^2$ distance for the ROD minimization problem, 
$$\alpha^{\text{ROD-$L^2$}} =  \frac{\sum_{j=0}^p \sum_{i=0}^p \phi^1_i(x_{1/2}) M^{-1}_{ij} \phi^1_j(x_{1/2}+d)}{\sum_{j=0}^p \sum_{i=0}^p \phi^1_i(x_{1/2}+d) M^{-1}_{ij} \phi^1_j(x_{1/2}+d)}, $$
and the stability regions are given in figure \ref{fig:DG_RODL2_explicit}.

Contrary to the ROD-E method, the ROD-$L^2$ shows stable areas for all values of $d\in[-1,0]$, with no constraint on the CFL with $p=1,2,3,4$, while for $p=5,6$ a constraint on the distance threshold appears. 
Instead, for $d\in[0,1]$, the method becomes unstable after a certain threshold $d>d_{max}$, which depends on the polynomial degree used to build the scheme, as also observed for ROD-E.
It is interesting to notice that by changing the distance function used in the ROD minimization problem, the stability region can be improved. As a matter of fact, for all values of $d\in[-1,0]$, ROD-E was stable only for $p=1,2,3$, while ROD-$L^2$ is also stable for $p=4$.  

The analytical eigenvalues of the ROD-$L^2$ $\mathbb{P}^1$ method are
$$
\lambda^{\text{ROD-$L^2$}}_{1,2} = \frac{- 9 d^{2} + 12 d  - 4 \pm \sqrt{81 d^{4} - 108 d^{3} + 36 d^{2} + 12 d - 8}}{2 \left(3 d^{2} - 3 d + 1\right)}
$$ 
Again, the scheme is stable in the semi-discrete sense when $d<2/3\approx 0.66$.

\begin{figure}
\centering
\includegraphics[width=0.98\textwidth,trim={1.5cm 1.5cm 1.5cm 1.5cm},clip]{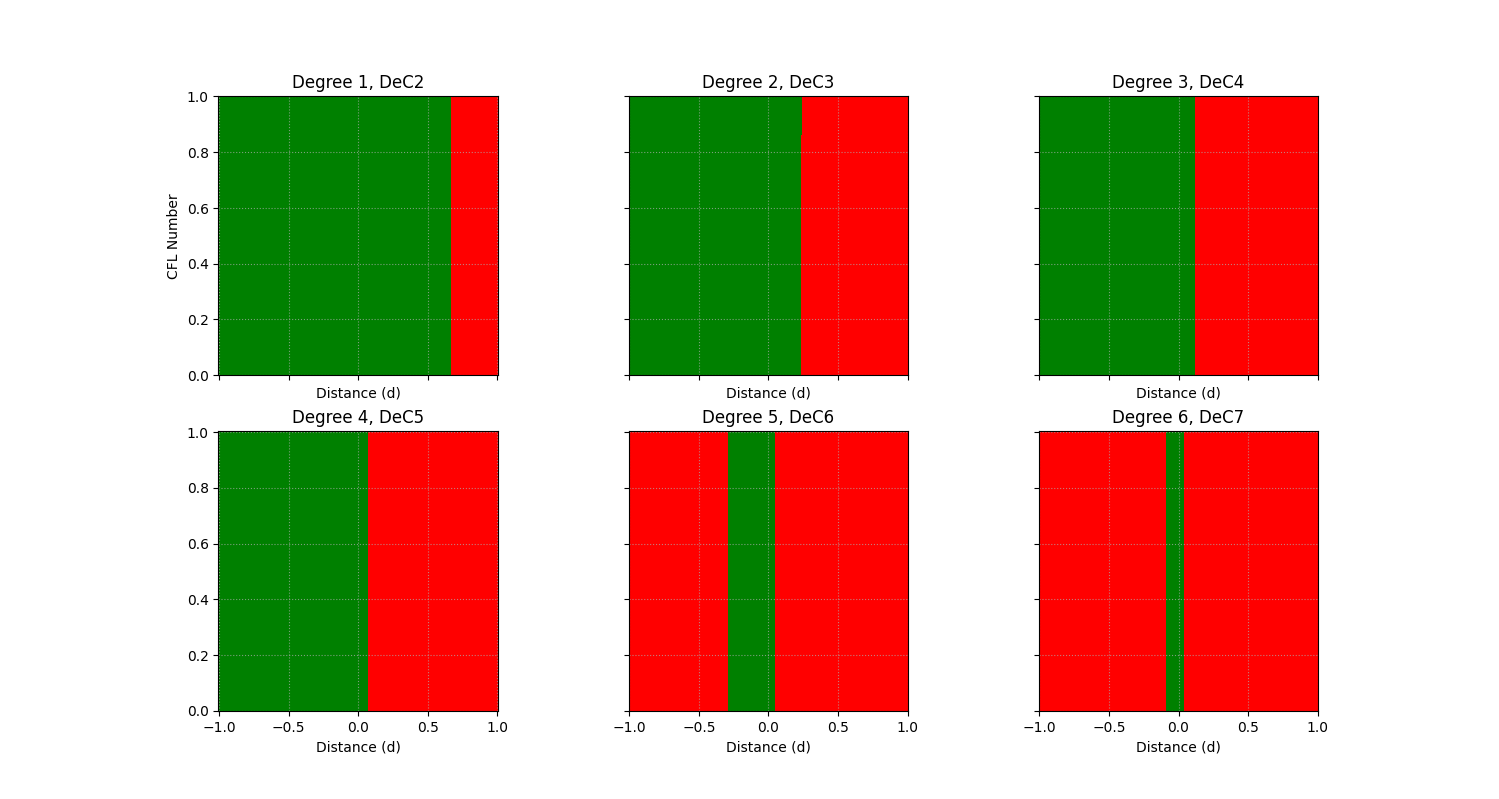}
\caption{Stability region: ROD-$L^2$ correction of the homogeneous Dirichlet condition for the explicit discontinuous Galerkin method. Green areas are stable, while red areas are unstable.}
\label{fig:DG_RODL2_explicit}
\end{figure}

\subsection{ROD treatment and implicit time integration}\label{sec:RODimplicit}

If we consider the implicit Euler time integration of the discontinuous Galerkin method with the ROD (either ROD-E or ROD-$L^2$) polynomial correction,
the global system reads

\begin{equation}
\begin{split}
M_1\frac{\mathbf{u}^{n+1}_1 - \mathbf{u}^{n}_1}{\Delta t} &= K^s_1 \mathbf{u}^{n+1}_1 - ( K^R_1 \mathbf{u}^{n+1}_1 - K^{\text{ROD}}_1 \mathbf{u}^{n+1}_1),\\
M_2\frac{\mathbf{u}^{n+1}_2 - \mathbf{u}^{n}_2}{\Delta t} &= K^s_2 \mathbf{u}^{n+1}_2 - ( K^R_2 \mathbf{u}^{n+1}_2 - K^L_2 \mathbf{u}^{n+1}_1 ),
\end{split}
\end{equation}

In figure \ref{fig:DG_RODE_implicit1}, the stability regions for ROD-E are shown considering $\text{CFL}\in[0,1]$.
Since, for values of $d\in[-1,0]$, the ROD-E with $p=1,2,3$ polynomials and explicit time integration is stable under standard CFL condition, we are interested more about the impact that the implicit time integration may have on higher order methods, i.e.\ with $p=4,5,6$. As for the SB method, for values of $d\in[0,1]$, there is no hope of having an unconditionally stable method.
Even with implicit time integration, the ROD-E method with $p=4,5,6$ is stable only for a value of CFL greater than a certain threshold, which becomes higher when using higher polynomials. In particular, we need $\text{CFL}\geq 3$ for $\mathbb{P}^4$, $\text{CFL}\geq 6$ for $\mathbb{P}^5$, and $\text{CFL}\geq 9$ for $\mathbb{P}^6$.    

\begin{figure}
\centering
\includegraphics[width=0.98\textwidth,trim={1.5cm 1.5cm 1.5cm 1.5cm},clip]{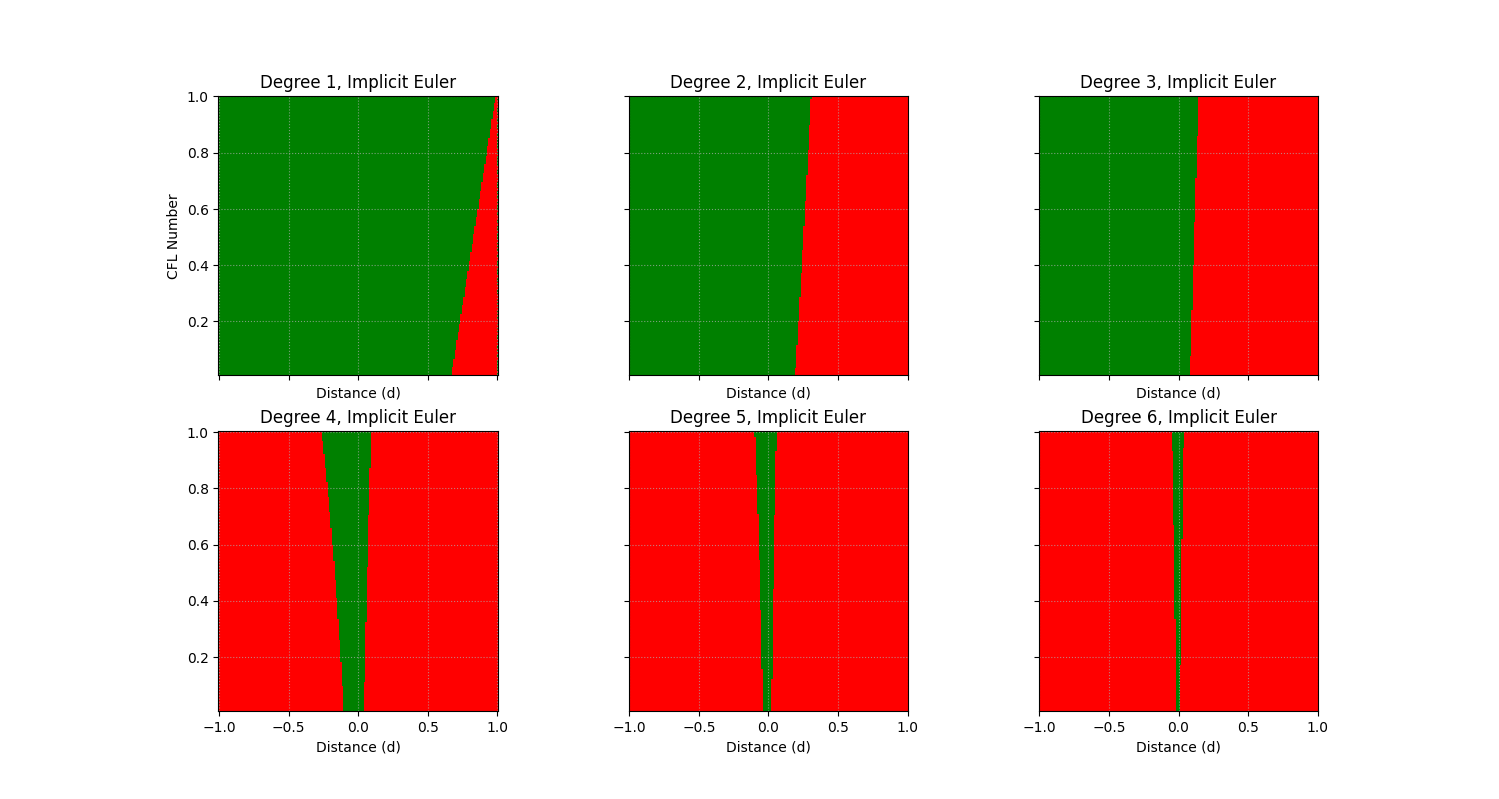}
\caption{Stability region: ROD-E correction of the homogeneous Dirichlet condition for the implicit discontinuous Galerkin method and $\text{CFL}\in[0,1]$. Green areas are stable, while red areas are unstable.}
\label{fig:DG_RODE_implicit1}
\end{figure}

\begin{figure}
\centering
\includegraphics[width=0.98\textwidth,trim={1.5cm 1.5cm 1.5cm 1.5cm},clip]{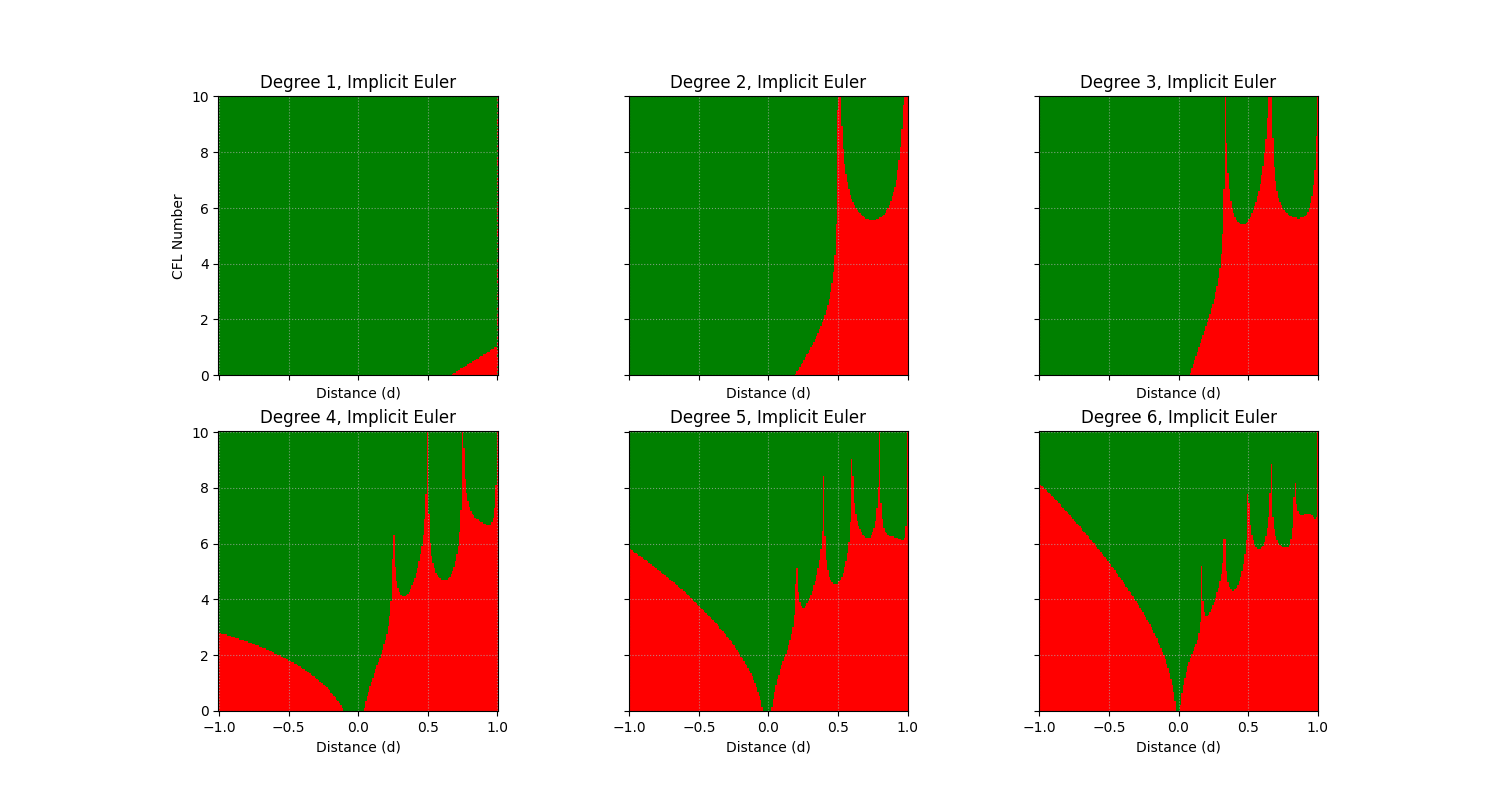}
\caption{Stability region: ROD-E correction of the homogeneous Dirichlet condition for the implicit discontinuous Galerkin method and $\text{CFL}\in[0,10]$. Green areas are stable, while red areas are unstable.}
\label{fig:DG_RODE_implicit10}
\end{figure}

Similarly, also for the ROD-$L^2$ method with implicit time integration, the main focus of the analysis is to study whether it is possible to stabilize the method for very high order polynomials. As a matter of fact, the ROD-$L^2$ method is already stable for all values of $d\in[-1,0]$ and $p=1,2,3,4$, with explicit time integration. 
However, as shown in figure \ref{fig:DG_RODL2_implicit1}, even implicit time integration does not provide a unconditionally stable scheme for all values of $\text{CFL}\in[0,1]$. On the contrary, as shown in figure \ref{fig:DG_RODL2_implicit10}, for $p=5,6$ a minimum CFL is always needed to have a stable method. Although better than ROD-E, we still need $\text{CFL}\geq 0.7$ for $\mathbb{P}^5$, and $\text{CFL}\geq 2$ for $\mathbb{P}^6$ to have a stable scheme.  

\begin{figure}
\centering
\includegraphics[width=0.98\textwidth,trim={1.5cm 1.5cm 1.5cm 1.5cm},clip]{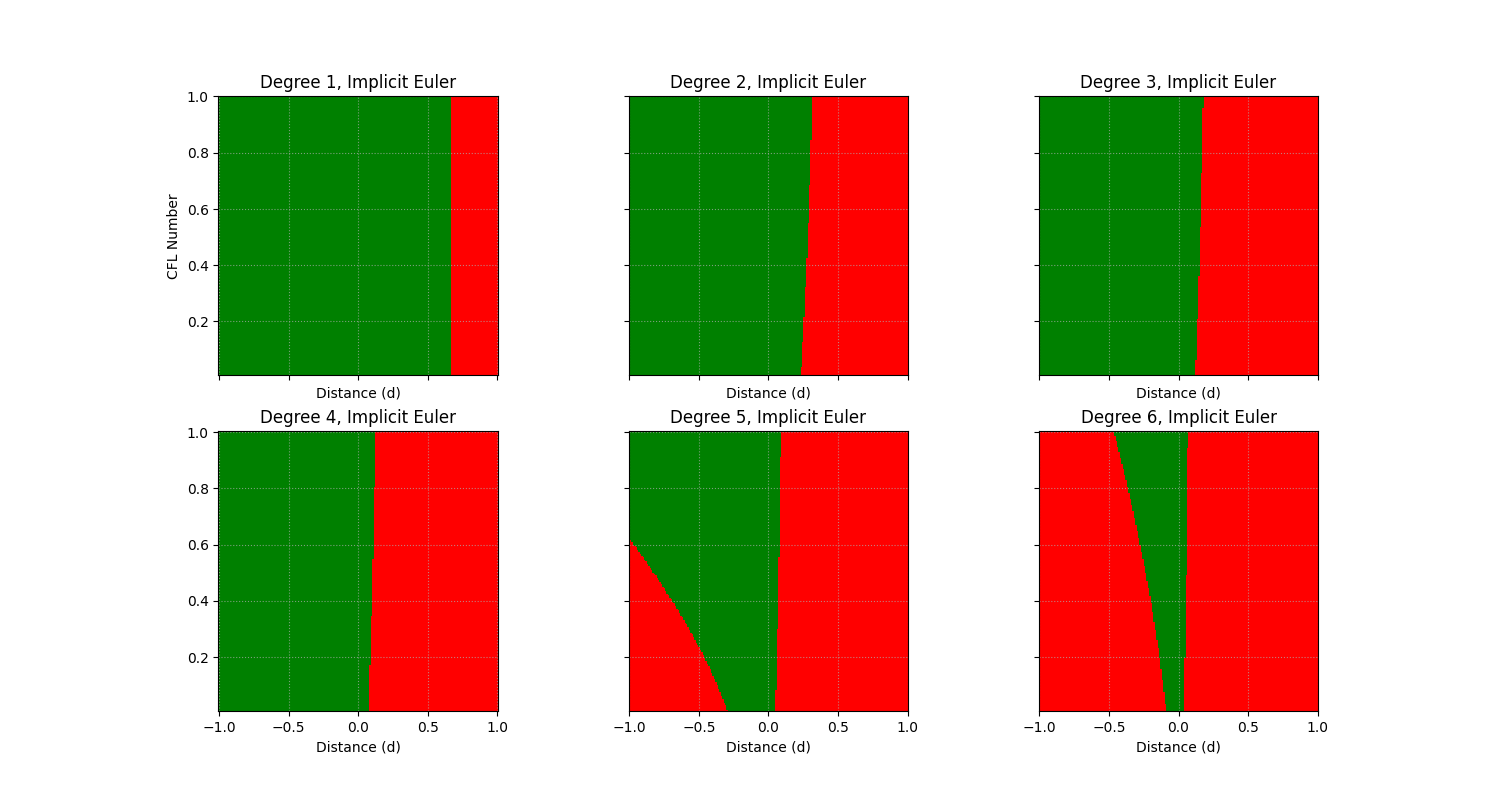}
\caption{Stability region: ROD-$L^2$ correction of the homogeneous Dirichlet condition for the implicit discontinuous Galerkin method and $\text{CFL}\in[0,1]$. Green areas are stable, while red areas are unstable.}
\label{fig:DG_RODL2_implicit1}
\end{figure}

\begin{figure}
\centering
\includegraphics[width=0.98\textwidth,trim={1.5cm 1.5cm 1.5cm 1.5cm},clip]{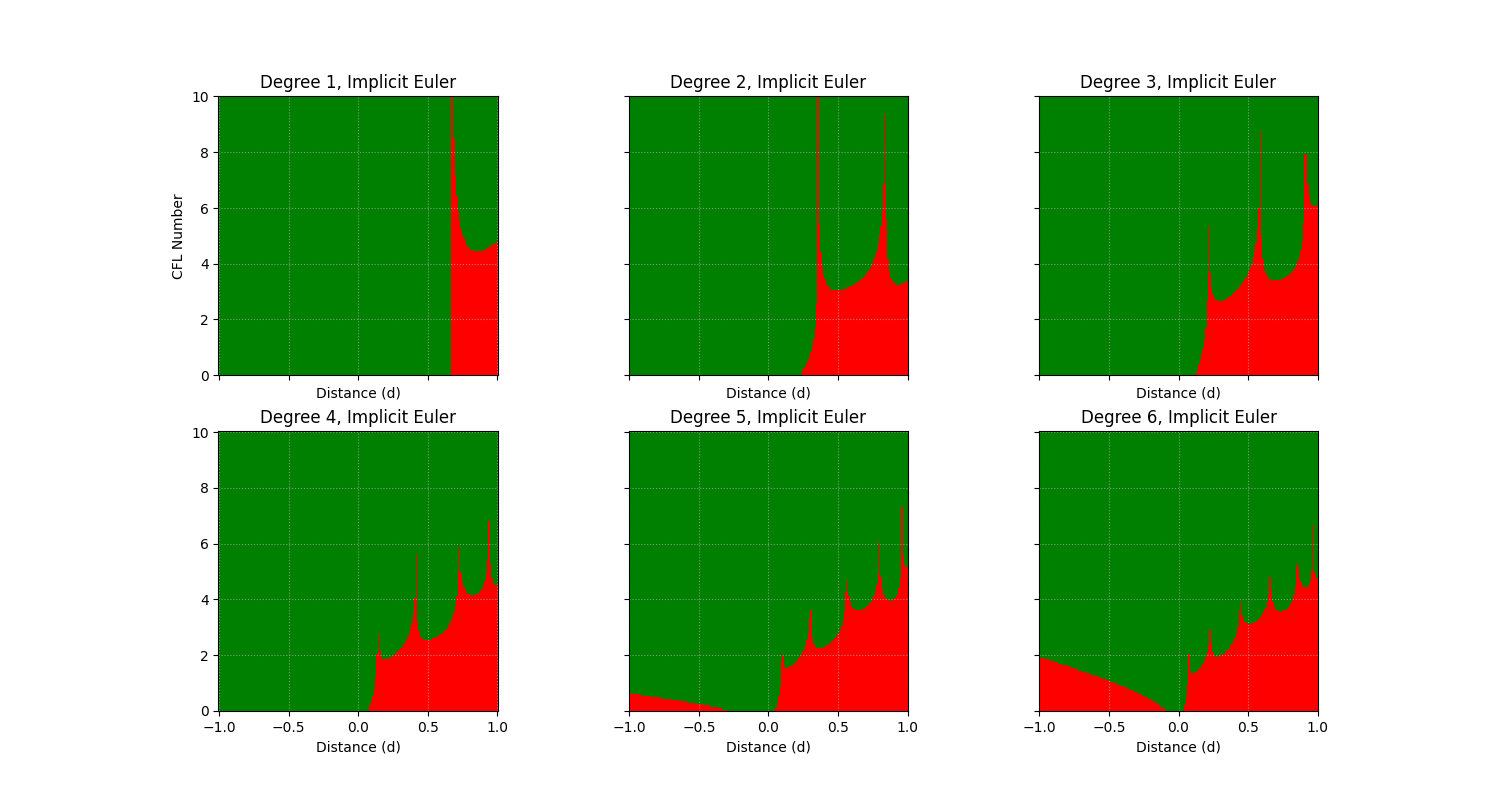}
\caption{Stability region: ROD-$L^2$ correction of the homogeneous Dirichlet condition for the implicit discontinuous Galerkin method and $\text{CFL}\in[0,10]$. Green areas are stable, while red areas are unstable.}
\label{fig:DG_RODL2_implicit10}
\end{figure}

\section{Numerical experiments}\label{sec:tests}

For the numerical experiments, we consider equation \eqref{eq:linear_advection_source} with a stationary manufactured solution 
$$ u_{ex}(x) = 0.1 \sin(\pi x).$$
$u_{ex}(x)$ is taken as initial condition for the numerical simulation.
This makes the solution independent from the accuracy of time integration.
The source term can be directly computed as
$$\partial_t u_{ex}(x) + \partial_x u_{ex}(x) = s(x), \quad\text{resulting in}\quad s(x) = 0.1 \pi \cos(\pi x) .$$
The one-dimensional domain is taken $\Omega=[0,2]$ with a Dirichlet boundary condition imposed at $x_L$.
Since we are dealing with linear advection equation with positive speed, nothing is imposed at $x_R$.
A set of uniform meshes with $N_e=20,40,80,160$ elements is considered for $\mathbb{P}^p$ with $p=1,2,3$. 
While another set of uniform meshes with $N_e=5,10,20,40$ elements is considered for $\mathbb{P}^p$ with $p=4,5,6$.

The ROD (both ROD-E and ROD-$L^2$) polynomial corrections are imposed considering different distance values $d$ of the boundary from the left interface of the first cell. We refer to the previous analysis of the SB method in \cite{ciallella2025stability} to highlight the differences in stability properties between these two families of methods. 
When $d=0$, the boundary coincides with the left interface of the first cell, and therefore
the correction is not applied. This is the case of standard fitted boundary conditions.
Since all studied methods exhibit more instability when $d\in[0,\Delta x]$, the distances considered in the numerical experiments will be taken as $d\in[-\Delta x,0]$, meaning with the real boundary point located outside the first mesh element.
For simplicity, in convergence tables, the distance is reported in non-dimensional form, i.e.\ $d/\Delta x$.
Stability results obtained through the analysis are perfectly corroborated by the numerical results for all configurations of $d$, and for both explicit and implicit time integration.

In table \ref{tab:LAE_RODEexplicit}, we present the numerical results obtained with the ROD-E correction and explicit time integration. Again, we consider the greater distance that makes the scheme stable under the associated maximum CFL value.    
As also mentioned above, for $d<0$, it is possible to reach the maximum distance of $d=-1$ for polynomials $p=1,2,3$ with no condition on the CFL.
For $p=4,5,6$, the scheme is unconditionally unstable for $d=-1$, and and the maximum stable distance is much lower: $d=-0.10$ for $\mathbb{P}^4$,  $d=-0.04$ for $\mathbb{P}^5$, and $d=-0.015$ for $\mathbb{P}^6$.
Contrary to the SB method \cite{ciallella2025stability}, there are no cases where it is possible to stabilize the ROD-E method by lowering the CFL. ROD-E is either stable under a classical CFL condition or unconditionally unstable; this is what happens for higher order polynomials after a certain distance threshold.

\begin{table}
        \caption{Linear advection equation: $L_2$ errors $\|u-u_{ex}\|_2$ and estimated order of accuracy (EOA) with the ROD-E correction and explicit time integration for polynomials $\mathbb{P}^p$.
        CFL and $d$ are chosen to consider the worst case scenario, i.e.\ the furthest possible $d\in[-1,0]$ and the associated maximum CFL.
        }\label{tab:LAE_RODEexplicit}
        \scriptsize
        \centering
        \begin{tabular}{c|cc|cc|cc} \hline
                        &\multicolumn{2}{c|}{$\mathbb{P}^1$, CFL=1.0, $d=-1.00$} &\multicolumn{2}{c|}{$\mathbb{P}^2$, CFL=1.0, $d=-1.00$} &\multicolumn{2}{c}{$\mathbb{P}^3$, CFL=1.0, $d=-1.00$}  \\[0.5mm]
                        \cline{2-7}
                        $N_e$,  & $L_2$ error & EOA  & $L_2$  error & EOA   & $L_2$ error & EOA    \\ \hline 
                        20      &  5.08E-04  &   --  & 1.45E-03  &  --  & 2.28E-05  &  --  \\
                        40      &  1.21E-04  & 2.07  & 1.81E-04  & 2.99 & 7.16E-07  &  4.99  \\
                        80      &  2.98E-05  & 2.01  & 2.26E-05  & 2.99 & 2.23E-08  &  5.00  \\
                        160     &  7.43E-06  & 2.00  & 2.83E-06  & 2.99 & 6.28E-10  &  5.14  \\
        \end{tabular}
        \begin{tabular}{c|cc|cc|cc} \hline
                        &\multicolumn{2}{c|}{$\mathbb{P}^4$, CFL=1.0, $d=-0.10$} &\multicolumn{2}{c|}{$\mathbb{P}^5$, CFL=1.0, $d=-0.04$} &\multicolumn{2}{c}{$\mathbb{P}^6$, CFL=1.0, $d=-0.015$}  \\[0.5mm]
                        \cline{2-7}
                        $N_e$,  & $L_2$ error & EOA  & $L_2$  error & EOA   & $L_2$ error & EOA    \\ \hline 
                        5      &  4.43E-05  & --   &  6.72E-07   &  --  & 4.88E-08  &  --  \\
                        10     &  1.73E-06  & 4.67 &  6.07E-09   & 6.78 & 4.55E-10  &  6.74 \\
                        20     &  5.69E-08  & 4.92 &  6.11E-11   & 6.63 & 3.70E-12  &  6.94 \\
                        40     &  1.80E-09  & 4.98 &  7.59E-13   & 6.33 & 7.47E-15  &  8.95 \\ \hline
        \end{tabular}
\end{table}

In table \ref{tab:LAE_RODL2explicit}, we present the numerical results obtained with the ROD-$L^2$ correction and explicit time integration. As before, we consider the greater distance that makes the scheme stable under the associated maximum CFL value.    
As also mentioned above, for $d<0$, it is possible reach the maximum distance of $d=-1$ for polynomials $p=1,2,3,4$ with no condition on the CFL, meaning the $\mathbb{P}^4$ method is stable, unlike the ROD-E case.
For $p=5,6$, the scheme is unconditionally unstable for $d=-1$, and the maximum distance value for such polynomials is much lower: $d=-0.25$ for $\mathbb{P}^5$, and $d=-0.05$ for $\mathbb{P}^6$.
Also in this case, there are no cases where it is possible to stabilize the ROD-$L^2$ method by lowering the CFL. ROD-$L^2$ is either stable under a classical CFL condition or unconditionally unstable. Similarly to what was observed for ROD-E.

\begin{table}
        \caption{Linear advection equation: $L_2$ errors $\|u-u_{ex}\|_2$ and estimated order of accuracy (EOA) with the ROD-$L^2$ correction and explicit time integration for polynomials $\mathbb{P}^p$.
        CFL and $d$ are chosen to consider the worst case scenario, i.e.\ the furthest possible $d\in[-1,0]$ and the associated maximum CFL.
        }\label{tab:LAE_RODL2explicit}
        \scriptsize
        \centering
        \begin{tabular}{c|cc|cc|cc} \hline
                        &\multicolumn{2}{c|}{$\mathbb{P}^1$, CFL=1.0, $d=-1.00$} &\multicolumn{2}{c|}{$\mathbb{P}^2$, CFL=1.0, $d=-1.00$} &\multicolumn{2}{c}{$\mathbb{P}^3$, CFL=1.0, $d=-1.00$}  \\[0.5mm]
                        \cline{2-7}
                        $N_e$,  & $L_2$ error & EOA  & $L_2$  error & EOA   & $L_2$ error & EOA    \\ \hline 
                        20      &  5.57E-04  &  --   & 1.92E-03  &  --  & 1.19E-05 &  --  \\
                        40      &  1.24E-04  &  2.16 & 2.40E-04  & 2.99 & 3.74E-07 & 4.99   \\
                        80      &  3.00E-05  &  2.04 & 3.01E-05  & 2.99 & 1.17E-08 & 4.99   \\
                        160     &  7.44E-06  &  2.01 & 3.76E-06  & 2.99 & 3.68E-10 & 4.99   \\
        \end{tabular}
        \begin{tabular}{c|cc|cc|cc} \hline
                        &\multicolumn{2}{c|}{$\mathbb{P}^4$, CFL=1.0, $d=-1.00$} &\multicolumn{2}{c|}{$\mathbb{P}^5$, CFL=1.0, $d=-0.25$} &\multicolumn{2}{c}{$\mathbb{P}^6$, CFL=1.0, $d=-0.05$}  \\[0.5mm]
                        \cline{2-7}
                        $N_e$,  & $L_2$ error & EOA  & $L_2$  error & EOA   & $L_2$ error & EOA    \\ \hline 
                        5      &  1.93E-02  &  --  &  2.42E-05   &  --  & 1.40E-07  &  --   \\
                        10     &  6.46E-04  & 4.90 &  1.98E-07   & 6.93 & 1.27E-09  & 6.78  \\
                        20     &  2.05E-05  & 4.97 &  1.57E-09   & 6.98 & 1.03E-11  & 6.94  \\
                        40     &  6.44E-07  & 4.99 &  1.23E-11   & 6.99 & 8.26E-14  & 6.95  \\ \hline
        \end{tabular}
\end{table}

In table \ref{tab:LAE_RODEimplicit}, we present the numerical results obtained with the ROD-E correction and implicit time integration. 
For implicit time integration, it is always possible to obtain a stable scheme for $d=-1$ but for some cases the considered method has a lower bound condition on the CFL.
For the ROD-E method with $p=1,2,3$ there are no conditions on the CFL, and it is therefore possible to use any CFL value and the scheme remains stable. This was expected since these schemes were also stable under the standard CFL condition with explicit time integration. In this case, it is interesting to see if coupling higher order methods with implicit time integration can result in stable schemes. 
However, for $p=4,5,6$ a lower bound on the CFL is necessary to ensure stability: for $p=4$ it is necessary to have $\text{CFL}\geq 3$, for $p=5$ we need $\text{CFL}\geq 6$, while for $p=6$ we have $\text{CFL}\geq 9$. 

\begin{table}
        \caption{Linear advection equation: $L_2$ errors $\|u-u_{ex}\|_2$ and estimated order of accuracy (EOA) with ROD-E correction and implicit time integration for polynomials $\mathbb{P}^p$.
        It is always considered the case where $d=-1.0$ with the smallest CFL $\geq 1$ value to make the scheme stable.
        }\label{tab:LAE_RODEimplicit}
        \scriptsize
        \centering
        \begin{tabular}{c|cc|cc|cc} \hline
                        &\multicolumn{2}{c|}{$\mathbb{P}^1$, CFL=1.0} &\multicolumn{2}{c|}{$\mathbb{P}^2$, CFL=1.0} &\multicolumn{2}{c}{$\mathbb{P}^3$, CFL=1.0}  \\[0.5mm]
                        \cline{2-7}
                        $N_e$,  & $L_2$ error & EOA  & $L_2$  error & EOA   & $L_2$ error & EOA    \\ \hline 
                        20      & 5.08E-04  &  --  &  1.45E-03  &  --  & 2.28E-05 &    --  \\
                        40      & 1.21E-04  & 2.07 &  1.81E-04  & 2.99 & 7.16E-07 &  4.99  \\
                        80      & 2.98E-05  & 2.01 &  2.26E-05  & 2.99 & 2.23E-08 &  5.00  \\
                        160     & 7.43E-06  & 2.00 &  2.83E-06  & 2.99 & 6.28E-10 &  5.14  \\
        \end{tabular}
        \begin{tabular}{c|cc|cc|cc} \hline
                        &\multicolumn{2}{c|}{$\mathbb{P}^4$, CFL=3.0} &\multicolumn{2}{c|}{$\mathbb{P}^5$, CFL=6.0} &\multicolumn{2}{c}{$\mathbb{P}^6$, CFL=9.0}  \\[0.5mm]
                        \cline{2-7}
                        $N_e$,  & $L_2$ error & EOA  & $L_2$  error & EOA   & $L_2$ error & EOA    \\ \hline 
                        5       & 2.77E-02  &  --  &  2.50E-02    &  --  &  2.33E-02   &  --   \\
                        10      & 1.10E-03  & 4.65 &  2.08E-04    & 6.91 &  2.17E-04   &  6.74 \\
                        20      & 3.62E-05  & 4.92 &  1.65E-06    & 6.97 &  1.77E-06   &  6.93 \\
                        40      & 1.15E-06  & 4.98 &  1.29E-08    & 6.99 &  1.44E-08   &  6.93 \\ \hline
        \end{tabular}
\end{table}

In table \ref{tab:LAE_RODL2implicit}, we present the numerical results obtained with the ROD-$L^2$ correction and implicit time integration. 
Also in this case, it is always possible to obtain a stable scheme for $d=-1$. However, in some cases, the method has a lower bound condition on the CFL.
For the ROD-$L^2$ method with $p=1,2,3,4$ there are no restruction on the CFL, and the scheme will stay stable. This was expected since these schemes were also stable under standard CFL condition with explicit time integration. 
Once again, we study if coupling higher order methods with implicit time integration can improve the stability for $p=5,6$. 
We show that for $p=5$ it is necessary to have $\text{CFL}\geq 0.7$, while for $p=6$ we need $\text{CFL}\geq 2$. 
Although less severe than for ROD-E, the ROD-$L^2$ method also exhibits parasitic modes for very high order polynomials that require higher CFL values for stabilization.

\begin{table}
        \caption{Linear advection equation: $L_2$ errors $\|u-u_{ex}\|_2$ and estimated order of accuracy (EOA) with ROD-$L^2$ correction and implicit time integration for polynomials $\mathbb{P}^p$.
        It is always considered the case where $d=-1.0$ with the smallest CFL $\geq 1$ value to make the scheme stable.
        }\label{tab:LAE_RODL2implicit}
        \scriptsize
        \centering
        \begin{tabular}{c|cc|cc|cc} \hline
                        &\multicolumn{2}{c|}{$\mathbb{P}^1$, CFL=1.0} &\multicolumn{2}{c|}{$\mathbb{P}^2$, CFL=1.0} &\multicolumn{2}{c}{$\mathbb{P}^3$, CFL=1.0}  \\[0.5mm]
                        \cline{2-7}
                        $N_e$,  & $L_2$ error & EOA  & $L_2$  error & EOA   & $L_2$ error & EOA    \\ \hline 
                        20      &  5.57E-04 &  --  &  1.92E-03  &  --  & 1.19E-05 & --    \\
                        40      &  1.24E-04 & 2.16 &  2.40E-04  & 2.99 & 3.74E-07 &  4.99  \\
                        80      &  3.00E-05 & 2.04 &  3.01E-05  & 2.99 & 1.17E-08 &  4.99  \\
                        160     &  7.44E-06 & 2.01 &  3.76E-06  & 2.99 & 3.68E-10 &  4.99  \\
        \end{tabular}
        \begin{tabular}{c|cc|cc|cc} \hline
                        &\multicolumn{2}{c|}{$\mathbb{P}^4$, CFL=1.0} &\multicolumn{2}{c|}{$\mathbb{P}^5$, CFL=1.0} &\multicolumn{2}{c}{$\mathbb{P}^6$, CFL=2.0}  \\[0.5mm]
                        \cline{2-7}
                        $N_e$,  & $L_2$ error & EOA  & $L_2$  error & EOA   & $L_2$ error & EOA    \\ \hline 
                        5       & 1.93E-02  &  --  &   2.06E-03   &  --    &   1.53E-03  &   --  \\
                        10      & 6.46E-04  & 4.90 &   1.67E-05   &  6.94  &   1.29E-05  &  6.88 \\
                        20      & 2.05E-05  & 4.97 &   1.32E-07   &  6.98  &   1.03E-07  &  6.97 \\
                        40      & 6.44E-07  & 4.99 &   1.03E-09   &  6.99  &   8.09E-10  &  6.99 \\ \hline
        \end{tabular}
\end{table}

\section{Conclusions}\label{sec:conclusions}

Herein, we presented a thorough stability analysis for DG methods coupled with arbitrary high order ROD embedded treatment, in the context of hyperbolic equations.
The numerical study was performed by visualizing the eigenspectrum of the discretized operators for a simplified system, 
and has provided critical insights into the stability properties of the considered geometrically unfitted approaches.
To perform this study, it was necessary to write all embedded methods in matrix form, to allows for a straightforward integration into the global discretized operators.  
In this paper, we show for the first time that the ROD method can be recast as a polynomial correction that only depends on the basis function values on the real boundary and on the computational one, similarly to the one obtained for the SB method in \cite{ciallella2023shifted}.  
For all ROD versions, it is interesting to notice the stability region is not symmetric when considering external ($d<0$) or internal ($d>0$) embedded boundaries.
The findings of the analysis show that the ROD polynomial correction, when discarding the element crossed by the embedded boundary (i.e.\ for $d<0$ configurations), does not introduce any restriction on the CFL for polynomial degree $p\leq 3$ for the ROD-E version, and $p\leq 4$ for the ROD-$L^2$ version. 
This is an advantage with respect to the SB method, which shows a restriction on the CFL that becomes stricter as the polynomial degree increases \cite{ciallella2025stability}.
Instead, for $d>0$, all considered methods become unconditionally unstable beyond a critical distance $d>d_{max}(p)$, where $d_{max}$ decreases when increasing the polynomial degree.
In the second part of the analysis, we study the impact of implicit time integration to understand whether it is possible to overcome the instabilities that arise with explicit methods.
In all cases, we observe improvements in the stability properties of the ROD methods coupled with implicit time integration, and for all configurations it is possible to obtain an unconditionally stable scheme. 
However for very high order methods, several configurations present lower bound conditions on the CFL: i.e.\ the CFL number should be high enough to stabilize the method. This is probably related to the fact that, even for implicit time integration, there are still some parasitic modes that make the method unstable and need larger CFL values to be stabilized.         
An extensive set of numerical experiments for the linear advection equation validate all results obtained by the stability analysis.
The high order convergence slopes were recovered for all stable configurations $(d,\text{CFL})$, and instabilities were observed precisely where the analysis predicted them.

Future work will extend this stability analysis to hyperbolic equations discretized by stabilized continuous Galerkin to investigate the impact of these embedded methods on a different finite element framework.
Further implementations of the efficient ROD polynomial corrections in multiple dimensions will be pursued to improve computational performances on more realistic configurations.
The possibility of perfoming a stability analysis based on the energy stability theory \cite{carpenter1999stable,mattsson2003boundary} and on the G-K-S theory \cite{gustafsson1972stability,strikwerda1980initial} will also be investigated.

\bibliographystyle{plain}
\bibliography{references}

\begin{thebibliography}{10}

\bibitem{atallah2022high}
Nabil~M Atallah, Claudio Canuto, and Guglielmo Scovazzi.
\newblock The high-order shifted boundary method and its analysis.
\newblock {\em Computer Methods in Applied Mechanics and Engineering},
  394:114885, 2022.

\bibitem{bassi1997high}
Francesco Bassi and Stefano Rebay.
\newblock High-order accurate discontinuous finite element solution of the 2d
  euler equations.
\newblock {\em Journal of computational physics}, 138(2):251--285, 1997.

\bibitem{berger2012progress}
Marsha Berger and Michael Aftosmis.
\newblock Progress towards a cartesian cut-cell method for viscous compressible
  flow.
\newblock In {\em 50th AIAA Aerospace Sciences Meeting Including the New
  Horizons Forum and Aerospace Exposition}, page 1301, 2012.

\bibitem{BOSCHERI2025114215}
Walter Boscheri and Mirco Ciallella.
\newblock High order treatment of moving curved boundaries:
  Arbitrary-lagrangian-eulerian methods with a shifted boundary polynomial
  correction.
\newblock {\em Journal of Computational Physics}, 539:114215, 2025.

\bibitem{carpenter1999stable}
Mark~H Carpenter, Jan Nordstr{\"o}m, and David Gottlieb.
\newblock A stable and conservative interface treatment of arbitrary spatial
  accuracy.
\newblock {\em Journal of Computational Physics}, 148(2):341--365, 1999.

\bibitem{ciallella2025stability}
Mirco Ciallella.
\newblock Stability analysis of discontinuous galerkin with a high order
  embedded boundary treatment for linear hyperbolic equations.
\newblock {\em arXiv preprint arXiv:2510.23231}, 2025.

\bibitem{ciallella2024very}
Mirco Ciallella, Stephane Clain, Elena Gaburro, and Mario Ricchiuto.
\newblock Very high order treatment of embedded curved boundaries in
  compressible flows: Ader discontinuous galerkin with a space-time
  reconstruction for off-site data.
\newblock {\em Computers \& Mathematics with Applications}, 175:1--18, 2024.

\bibitem{ciallella2023shifted}
Mirco Ciallella, Elena Gaburro, Marco Lorini, and Mario Ricchiuto.
\newblock Shifted boundary polynomial corrections for compressible flows: high
  order on curved domains using linear meshes.
\newblock {\em Applied Mathematics and Computation}, 441:127698, 2023.

\bibitem{ciallella2022extrapolated}
Mirco Ciallella, Mario Ricchiuto, Renato Paciorri, and Aldo Bonfiglioli.
\newblock Extrapolated discontinuity tracking for complex 2d shock
  interactions.
\newblock {\em Computer Methods in Applied Mechanics and Engineering},
  391:114543, 2022.

\bibitem{cockburn2001runge}
Bernardo Cockburn and Chi-Wang Shu.
\newblock Runge--kutta discontinuous galerkin methods for convection-dominated
  problems.
\newblock {\em Journal of scientific computing}, 16(3):173--261, 2001.

\bibitem{coco2013finite}
Armando Coco and Giovanni Russo.
\newblock Finite-difference ghost-point multigrid methods on cartesian grids
  for elliptic problems in arbitrary domains.
\newblock {\em Journal of Computational Physics}, 241:464--501, 2013.

\bibitem{coppeans2025aerodynamic}
Alexander Coppeans, Krzysztof Fidkowski, and Joaquim~R Martins.
\newblock Aerodynamic shape optimization with curved mesh adaptation.
\newblock In {\em AIAA SCITECH 2025 Forum}, page 0782, 2025.

\bibitem{costa2018very}
Ricardo Costa, St{\'e}phane Clain, Rapha{\"e}l Loub{\`e}re, and Gaspar~J
  Machado.
\newblock Very high-order accurate finite volume scheme on curved boundaries
  for the two-dimensional steady-state convection--diffusion equation with
  dirichlet condition.
\newblock {\em Applied Mathematical Modelling}, 54:752--767, 2018.

\bibitem{costa2019very}
Ricardo Costa, Jo{\~a}o~M N{\'o}brega, St{\'e}phane Clain, Gaspar~J Machado,
  and Rapha{\"e}l Loub{\`e}re.
\newblock Very high-order accurate finite volume scheme for the
  convection-diffusion equation with general boundary conditions on arbitrary
  curved boundaries.
\newblock {\em International Journal for Numerical Methods in Engineering},
  117(2):188--220, 2019.

\bibitem{dutt2000spectral}
Alok Dutt, Leslie Greengard, and Vladimir Rokhlin.
\newblock Spectral deferred correction methods for ordinary differential
  equations.
\newblock {\em BIT Numerical Mathematics}, 40(2):241--266, 2000.

\bibitem{fadlun2000combined}
Ever~A Fadlun, Roberto Verzicco, Paolo Orlandi, and Jamaludin Mohd-Yusof.
\newblock Combined immersed-boundary finite-difference methods for
  three-dimensional complex flow simulations.
\newblock {\em Journal of computational physics}, 161(1):35--60, 2000.

\bibitem{fedkiw1999non}
Ronald~P Fedkiw, Tariq Aslam, Barry Merriman, and Stanley Osher.
\newblock A non-oscillatory eulerian approach to interfaces in multimaterial
  flows (the ghost fluid method).
\newblock {\em Journal of computational physics}, 152(2):457--492, 1999.

\bibitem{giuliani2022two}
Andrew Giuliani.
\newblock A two-dimensional stabilized discontinuous galerkin method on
  curvilinear embedded boundary grids.
\newblock {\em SIAM Journal on Scientific Computing}, 44(1):A389--A415, 2022.

\bibitem{gustafsson1972stability}
Bertil Gustafsson, Heinz-Otto Kreiss, and Arne Sundstr{\"o}m.
\newblock Stability theory of difference approximations for mixed initial
  boundary value problems. ii.
\newblock {\em Mathematics of Computation}, 26(119):649--686, 1972.

\bibitem{hansbo2002unfitted}
Anita Hansbo and Peter Hansbo.
\newblock An unfitted finite element method, based on nitsche’s method, for
  elliptic interface problems.
\newblock {\em Computer methods in applied mechanics and engineering},
  191(47-48):5537--5552, 2002.

\bibitem{mattsson2003boundary}
Ken Mattsson.
\newblock Boundary procedures for summation-by-parts operators.
\newblock {\em Journal of Scientific Computing}, 18(1):133--153, 2003.

\bibitem{micalizzi2024new}
Lorenzo Micalizzi and Davide Torlo.
\newblock A new efficient explicit deferred correction framework: analysis and
  applications to hyperbolic pdes and adaptivity.
\newblock {\em Communications on Applied Mathematics and Computation},
  6(3):1629--1664, 2024.

\bibitem{mittal2005immersed}
Rajat Mittal and Gianluca Iaccarino.
\newblock Immersed boundary methods.
\newblock {\em Annu. Rev. Fluid Mech.}, 37(1):239--261, 2005.

\bibitem{monasse2012conservative}
Laurent Monasse, Virginie Daru, Christian Mariotti, Serge Piperno, and
  Christian Tenaud.
\newblock A conservative coupling algorithm between a compressible flow and a
  rigid body using an embedded boundary method.
\newblock {\em Journal of Computational Physics}, 231(7):2977--2994, 2012.

\bibitem{persson2009curved}
Per-Olof Persson and Jaime Peraire.
\newblock Curved mesh generation and mesh refinement using lagrangian solid
  mechanics.
\newblock In {\em 47th AIAA aerospace sciences meeting including the new
  horizons forum and aerospace exposition}, page 949, 2009.

\bibitem{peskin2002immersed}
Charles~S Peskin.
\newblock The immersed boundary method.
\newblock {\em Acta numerica}, 11:479--517, 2002.

\bibitem{rapaka2020efficient}
Narsimha~Reddy Rapaka and Ravi Samtaney.
\newblock An efficient poisson solver for complex embedded boundary domains
  using the multi-grid and fast multipole methods.
\newblock {\em Journal of Computational Physics}, 410:109387, 2020.

\bibitem{reed1973triangular}
William~H Reed and Thomas~R Hill.
\newblock Triangular mesh methods for the neutron transport equation.
\newblock Technical report, Los Alamos Scientific Lab., N. Mex.(USA), 1973.

\bibitem{santos2024very}
Milene Santos, Ad{\'e}rito Ara{\'u}jo, S{\'\i}lvia Barbeiro, St{\'e}phane
  Clain, Ricardo Costa, and Gaspar~J Machado.
\newblock Very high-order accurate discontinuous galerkin method for curved
  boundaries with polygonal meshes.
\newblock {\em Journal of Scientific Computing}, 100(3):66, 2024.

\bibitem{strikwerda1980initial}
John~C Strikwerda.
\newblock Initial boundary value problems for the method of lines.
\newblock {\em Journal of Computational Physics}, 34(1):94--107, 1980.

\bibitem{tan2010inverse}
Sirui Tan and Chi-Wang Shu.
\newblock Inverse lax-wendroff procedure for numerical boundary conditions of
  conservation laws.
\newblock {\em Journal of Computational Physics}, 229(21):8144--8166, 2010.

\bibitem{M2AN_2015__49_1_39_0}
Fran\c{c}ois Vilar and Chi-Wang Shu.
\newblock Development and stability analysis of the inverse
  {Lax\ensuremath{-}Wendroff} boundary treatment for central compact schemes.
\newblock {\em ESAIM: Mathematical Modelling and Numerical Analysis},
  49(1):39--67, 2015.

\bibitem{visbech2025spectral}
Jens Visbech, Allan~P Engsig-Karup, and Mario Ricchiuto.
\newblock A spectral element solution of the poisson equation with shifted
  boundary polynomial corrections: influence of the surrogate to true boundary
  mapping and an asymptotically preserving robin formulation.
\newblock {\em Journal of Scientific Computing}, 102(1):11, 2025.

\bibitem{wang2013high}
Zhijian~J Wang, Krzysztof Fidkowski, R{\'e}mi Abgrall, Francesco Bassi, Doru
  Caraeni, Andrew Cary, Herman Deconinck, Ralf Hartmann, Koen Hillewaert,
  Hung~T Huynh, et~al.
\newblock High-order cfd methods: current status and perspective.
\newblock {\em International Journal for Numerical Methods in Fluids},
  72(8):811--845, 2013.

\bibitem{yang2025inverse}
Lei Yang, Shun Li, Yan Jiang, Chi-Wang Shu, and Mengping Zhang.
\newblock Inverse lax-wendroff boundary treatment of discontinuous galerkin
  method for 1d conservation laws.
\newblock {\em Communications on Applied Mathematics and Computation},
  7(2):796--826, 2025.

\end{thebibliography}

\end{document}